\theoremstyle{plain}
\newtheorem{thm}{\protect\theoremname}
  \theoremstyle{definition}
  \newtheorem{defn}[thm]{\protect\definitionname}
  \theoremstyle{plain}
  \newtheorem{cor}[thm]{\protect\corollaryname}
  \theoremstyle{plain}
  \newtheorem{prop}[thm]{\protect\propositionname}
  \theoremstyle{remark}
  \newtheorem{rem}[thm]{\protect\remarkname}
\DeclareMathAlphabet{\mathitbf}{OML}{cmm}{b}{it}
  \providecommand{\corollaryname}{Corollary}
  \providecommand{\definitionname}{Definition}
  \providecommand{\propositionname}{Proposition}
  \providecommand{\remarkname}{Remark}
\providecommand{\theoremname}{Theorem}
\begin{document}

\title{Local times for multifractional Brownian motion in higher dimensions:
A white noise approach}

\author{\textbf{Wolfgang Bock} ,\\
CMAF, Universidade de Lisboa,\\
1649-003 Lisbon, Portugal.\\
Email: bock@campus.ul.pt\and\textbf{ José Luís da Silva},\\
CCM, University of Madeira, Campus da Penteada,\\
9020-105 Funchal, Portugal.\\
Email: luis@uma.pt\and\textbf{Herry P.}~\textbf{Suryawan}\\
Department of Mathematics\\
Sanata Dharma University\\
Yogyakarta, Indonesia\\
Email: herrypribs@usd.ac.id}
\maketitle
\begin{abstract}
We present the expansion of the multifractional Brownian (mBm) local
time in higher dimensions, in terms of Wick powers of white noises
(or multiple Wiener integrals). If a suitable number of kernels is
subtracted, they exist in the sense of generalized white noise functionals.
Moreover we show the convergence of the regularized truncated local
times for mBm in the sense of Hida distributions. \medskip{}

\noindent \textbf{Keywords}: Local time, multifractional Brownian
motion, white noise analysis.
\end{abstract}

\section{Introduction}

Over the last decades fractional Brownian motion (fBm) with Hurst
parameter $H$ has become an intensively studied object. This centered
Gaussian process $B_{H}$ with covariance function

\[
\mathbb{E}(B_{H}(t)B_{H}(s))=\frac{{1}}{2}\left(|t|^{2H}+|s|^{2H}-|t-s|^{2H}\right),\quad t,s>0,
\]
was first introduced by Mandelbrot and Van-Ness \cite{MandelbrotNess1968}.
Instead of giving an exhaustive overview about fBm we refer to the
articles \cite{Alos-Nualar-03,Bender03a,Bender03,DHP00,Elliott-Hoek-03,HO03,Nualart2006,drumond-oliveira-silva08}
and monographs \cite{oks,mishura08} and the references therein. 

Due to its properties such as Hölder continuity of any order less
than $H$, long-range dependence and stationary increments, the process
is used for modeling problems from telecommunications, finance and
engineering. Although there are various problems accessible, the use
of fBm involves a restriction to a certain Hölder continuity $H$
of the paths for all times of the process. For many applications this
is too restrictive and variable time-dependent Hölder continuities
of the paths are needed. 

To overcome this Lévy Véhel and Peltier \cite{Peltier:1995ug} and
Benassi et.al. \cite{Benassi:1997jf} independently introduced multifractional
Brownian motion (mBm) $B_{h}$, where the regularity of the paths
is a function of time. The covariance of the centered Gaussian process
$B_{h}$ is given by

\[
\mathbb{E}(B_{h}(t)B_{h}(s))=\frac{C\left(\frac{h(t)+h(s)}{2}\right)^{2}}{C(h(t))C(h(s))}\left[\frac{1}{2}(t^{h(t)+h(s)}+s^{h(t)+h(s)}-|t-s|^{h(t)+h(s)})\right],
\]
where $h:[0,T]\longrightarrow(1/2,1)$ is a continuous function and 

\[
C(x):=\left(\frac{2\pi}{\Gamma(2x+1)\sin(\pi x)}\right)^{1/2},
\]
where $\Gamma$ is the Gamma function. Different properties of this
process are recently studied, such as Hölder continuity of the paths
and Hausdorff dimension \cite{Benassi:1997jf,Boufoussi:2008}, as
well as local times of mBm \cite{ayache2011,boufoussi2007,meerschaert2008}
and estimates for the local Hurst parameters \cite{BFG13}. In white
noise analysis mBm was treated recently in \cite{Lebovits:2014es}
together with its respective stochastic calculus.

In this article we use a white noise approach to determine the kernels
in the Wiener-Itô-Segal chaos decomposition of the (truncated) local
time of a $d$-dimensional mBm for $h:[0,T]\longrightarrow(1/2,1)$.
As in \cite{drumond-oliveira-silva08} for fBm, we show the convergence
of the regularized local time for mBm to the truncated local time
via the convergence of Hida distributions.

\section{Gaussian white noise analysis}

In this section we review some of the standard concepts and theorems
of white noise analysis used throughout this work, and refer to \cite{HKPS93,KLPSW96,Kuo96}
and references therein for a detailed presentation. 

We start with the basic Gel'fand triple 
\[
S_{d}\subset L_{d}^{2}\subset S'_{d},
\]
where $S_{d}:=S(\mathbb{R},\mathbb{R}^{d})$, $d\in\mathbb{N}$, is
the space of vector valued Schwartz test functions, $S'_{d}$ its
topological dual and the central Hilbert space $L_{d}^{2}:=L^{2}(\mathbb{R},\mathbb{R}^{d})$
of square integrable vector valued functions, i.e., 
\[
|f|_{0}^{2}=\sum_{i=1}^{d}\int_{\mathbb{R}}f_{i}^{2}(x)\, dx,\quad f\in L_{d}^{2}.
\]

Since $ $$S_{d}$ is a nuclear space, represented as projective limit
of a decreasing chain of Hilbert spaces $ $$(H_{p})_{p\in\mathbb{{N}}}$,
see e.g.\cite{RS75a} and \cite{GV68}, i.e. 
\[
S_{d}=\bigcap_{p\in\mathbb{{N}}}H_{p},
\]
 we have that $S_{d}$ is a countably Hilbert space in the sense of
Gel'fand and Vilenkin \cite{GV68}. We denote the corresponding norm
on $H_{p}$ by $|\cdot|_{p},$ with the convention $H_{0}=L_{d}^{2}$.
Let $H_{-p}$ be the dual space of $H_{p}$ and let $\langle\cdot,\cdot\rangle$
denote the dual pairing on $H_{-p}\times H_{p}$. $H_{p}$ is continuously
embedded into $L_{d}^{2}$. By identifying $L_{d}^{2}$ with its dual
via the Riesz isomorphism, we obtain the chain $H_{p}\subset L_{d}^{2}\subset H_{-p}$.
Note that $S'_{d}=\bigcup_{p\in\mathbb{{N}}}H_{-p}$, i.e. $S'_{d}$
is the inductive limit of the increasing chain of Hilbert spaces $(H_{-p})_{p\in\mathbb{{N}}}$,
see e.g.~\cite{GV68}. We denote the dual pairing of $\ensuremath{S_{d}'}$
and $\ensuremath{S_{d}}$ also by $\langle\cdot,\cdot\rangle$.

Let $\mathscr{B}$ be the $\sigma$-algebra generated by cylinder
sets on $S'_{d}$. By Minlos\textquoteright{} theorem there is a unique
probability measure $\mu_{d}$ on $(S'_{d},\mathscr{B})$ with characteristic
function given by
\[
\int_{S'_{d}}e^{i\langle\bm{w},\bm{\varphi}\rangle}d\mu_{d}(\bm{w})=\exp\left(-\frac{1}{2}|\bm{\varphi}|_{0}^{2}\right),\quad\bm{\varphi}\in S_{d}.
\]
Hence, we have defined the white noise measure space $(S'_{d},\mathscr{B},\mu_{d})$.
The complex Hilbert space $L^{2}(\mu_{d}):=L^{2}(S'_{d},\mathscr{B},\mu_{d})$
is canonically isomorphic to the Fock space of symmetric square integrable
functions 
\begin{equation}
L^{2}(\mu_{d})\simeq\left(\bigoplus_{k=0}^{\infty}\mathrm{Sym}L^{2}(\mathbb{R}^{k},k!d^{k}x)\right)^{\otimes d}\label{eq:Fock-isomorphism}
\end{equation}
which implies the Wiener-Itô-Segal chaos decomposition for any element
$F$ in $L^{2}(\mu_{d})$
\[
F(\bm{w})=\sum_{\bm{n}\in\mathbb{N}^{d}}\langle:\bm{w}^{\otimes\bm{n}}:,F_{\bm{n}}\rangle
\]
with the kernel function $F_{\bm{n}}$ in the Fock space. We introduce
the following notation for simplicity
\[
\bm{n}=(n_{1},\ldots,n_{d})\in\mathbb{N}_{0}^{d},\quad n=n_{1}+\ldots+n_{d},\quad\bm{n}!=n_{1}!\ldots n_{d}!
\]
and for any $\bm{w}=(w_{1},\ldots,w_{d})\in S'_{d}$
\[
:\bm{w}^{\otimes\bm{n}}:=:w_{1}^{\otimes n_{1}}:\otimes\ldots\otimes:w_{d}^{\otimes n_{d}}:,
\]
where $:w^{\otimes n}:$ denotes the $n$-th Wick tensor power of
the element $w\in S_{1}'$, for its definition see e.g. \cite{HKPS93}.
For any $F\in L^{2}(\mu_{d})$, the isomorphism (\ref{eq:Fock-isomorphism})
yields 
\[
\|F\|_{L^{2}(\mu_{d})}^{2}:=\sum_{\bm{n}\in\mathbb{N}^{d}}\bm{n}!|F_{\bm{n}}|_{0}^{2},
\]
where the symbol $|\cdot|_{0}$ is also preserved for the norms on
$L^{2}(\mathbb{{R}},\mathbb{{R}}^{d})_{\mathbb{{C}}}^{\hat{\otimes}\bm{{n}}}$,
for simplicity. By the standard construction with the space of square-integrable
functions w.r.t.~$\mu_{d}$ as central space, we obtain the Gel'fand
triple of Hida test functions and Hida distributions.
\[
(S_{d})\subset L^{2}(\mu_{d})\subset(S_{d})'.
\]
In the following we denote the dual pairing between elements of $(S_{d})'$
and $(S_{d})$ by $\langle\!\langle\cdot,\cdot\rangle\!\rangle$.
For $F\in L^{2}(\mu_{d})$ and $\varphi\in(S_{d})$, with kernel functions
$f_{\bm{n}}$ and $\varphi_{\bm{n}}$, resp. the dual pairing yields
\[
\langle\!\langle F,\varphi\rangle\!\rangle=\sum_{\bm{n}}\bm{n}!\langle f_{\bm{n}},\varphi_{\bm{n}}\rangle
\]
 This relation extends the chaos expansion to $\Phi\in(S_{d})'$ with
distribution valued kernels $\Phi_{\bm{n}}$ such that 
\[
\langle\!\langle\Phi,\varphi\rangle\!\rangle=\sum_{\bm{n}}\bm{n}!\langle\Phi_{\bm{n}},\varphi_{\bm{n}}\rangle,
\]
 for every generalized test function $\varphi\in(S_{d})$ with kernels
$\varphi_{\bm{n}}$.

Instead of reviewing the detailed construction of these spaces we
give a characterization in terms of the $S$-transform.
\begin{defn}
Let $\boldsymbol{\xi}\in S_{d}$, then $:\exp(\langle.,\boldsymbol{\xi}\rangle):\,\,:=\sum\limits _{k=0}^{\infty}\frac{{1}}{k!}\langle:.^{\otimes n}:,\boldsymbol{\xi^{\otimes n}}\rangle\in(S_{d})$
and we define the $S$-transform of $\Phi\in(S_{d})'$ by

\[
(S\Phi)(\bm{\xi})=\langle\!\langle\Phi,:\exp(\langle.,\boldsymbol{\xi}\rangle):\rangle\!\rangle.
\]
 
\end{defn}

\begin{defn}
[$U$-functional]\label{def:U-functional}A function $F:S_{d}\longrightarrow\mathbb{C}$
is called a $U$-functional whenever
\begin{enumerate}
\item for every $\bm{\varphi}_{1},\bm{\varphi}_{2}\in S_{d}$ the mapping
$\mathbb{R}\ni\lambda\mapsto F(\lambda\bm{\varphi}_{1}+\bm{\varphi}_{2})$
has an entire extension to $z\in\mathbb{C}$,
\item there are constants $K_{1},K_{2}>0$ such that 
\[
\left|F(z\bm{\varphi})\right|\leq K_{1}\exp\left(K_{2}\left|z\right|^{2}\|\bm{\varphi}\|^{2}\right),\quad\forall z\in\mathbb{C},\bm{\varphi}\in S_{d}
\]
 for some continuous norm $\|\cdot\|$ on $S_{d}$.
\end{enumerate}
\end{defn}
We are now ready to state the aforementioned characterization result.
\begin{thm}
[cf.~\cite{KLPSW96}, \cite{PS91}]\label{thm:S-transform-U-functional}
The $S$-transform defines a bijection between the space $(S_{d})'$
and the space of $U$-functionals. In other words, $\Phi\in(S_{d})'$
if and only if $S\Phi:S_{d}\to\mathbb{{C}}$ is a $U-$functional.
\end{thm}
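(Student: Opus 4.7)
The plan is to prove the two implications separately: that $S$ maps $(S_{d})'$ into the space of $U$-functionals, and that every $U$-functional arises as $S\Phi$ for a unique $\Phi\in(S_{d})'$. The easy direction is the former; the substantive content lies in the inversion.

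For the forward direction I would fix $\Phi\in(S_{d})'$ and first check that $S\Phi$ is well-defined as a complex-valued function on $S_{d}$. Since the Hida distribution space is the inductive limit of a chain of weighted Fock-type Hilbert spaces, there exist $p,q\in\mathbb{N}$ such that $\Phi$ is continuous in the $(p,q)$-norm. The Wick exponential $:\!\exp(\langle\cdot,\bm{\xi}\rangle)\!:$ has an explicit chaos expansion with kernels $\bm{\xi}^{\otimes\bm{n}}/\bm{n}!$, from which one reads off an estimate of the form $\|\!:\!\exp(\langle\cdot,\bm{\xi}\rangle)\!:\!\|_{p,q}\le C\exp(K\,|\bm{\xi}|_{p}^{2})$ for some continuous norm $|\cdot|_{p}$ on $S_{d}$. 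Plugging $z\bm{\varphi}$ in place of $\bm{\xi}$ immediately gives the exponential-quadratic bound in Definition~\ref{def:U-functional}(2). For the entire extension in (1), observe that for fixed $\bm{\varphi}_{1},\bm{\varphi}_{2}$ the map $z\mapsto\ :\!\exp(\langle\cdot,z\bm{\varphi}_{1}+\bm{\varphi}_{2}\rangle)\!:$ is entire as a Hida-test-function-valued function, since its chaos coefficients are polynomials in $z$ and the series converges absolutely in the $(p,q)$-norm uniformly on compact sets of $z$; applying the continuous functional $\Phi$ preserves analyticity.

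The converse direction is where the real work sits. Starting from a $U$-functional $F$, I would first recover a candidate sequence of kernels from the Taylor expansion. For any fixed $\bm{\varphi}\in S_{d}$, entirety of $z\mapsto F(z\bm{\varphi})$ gives Taylor coefficients $a_{n}(\bm{\varphi})=\frac{1}{n!}\frac{d^{n}}{dz^{n}}F(z\bm{\varphi})\big|_{z=0}$. By polarization applied to the multilinear structure coming from the two-variable entire extension in (1), these coefficients arise from symmetric continuous $n$-linear forms $\tilde F_{n}$ on $S_{d}^{n}$ with $a_{n}(\bm{\varphi})=\tilde F_{n}(\bm{\varphi},\ldots,\bm{\varphi})$. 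The Cauchy integral formula $\tilde F_{n}(\bm{\varphi},\ldots,\bm{\varphi})=\frac{1}{2\pi i}\oint_{|z|=r}\frac{F(z\bm{\varphi})}{z^{n+1}}dz$, combined with the growth bound and optimization in $r$, yields the key estimate $|\tilde F_{n}(\bm{\varphi},\ldots,\bm{\varphi})|\le K_{1}(2eK_{2}/n)^{n/2}\|\bm{\varphi}\|^{n}$ for some continuous norm $\|\cdot\|$ on $S_{d}$, whence a symmetric distributional kernel $F_{\bm{n}}$ in a suitable dual Hilbert space $H_{-p}^{\hat\otimes\bm{n}}$ is obtained via the kernel theorem applied to the symmetric $n$-linear continuous functional.

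I would then define the candidate distribution by the formal chaos expansion
\[
\Phi=\sum_{\bm{n}\in\mathbb{N}_{0}^{d}}\langle:\!\cdot^{\otimes\bm{n}}\!:,F_{\bm{n}}\rangle
\]
and verify convergence in $(S_{d})'$ by pairing with an arbitrary $\bm{\varphi}\in(S_{d})$: the $(p,q)$-norm bound above on $F_{\bm{n}}$ combined with the defining growth of test function kernels gives term-by-term estimates whose sum converges for $q$ large enough, i.e. $\Phi$ lies in some $(H_{p})^{-q}\subset(S_{d})'$. Finally I would check $S\Phi=F$: pairing $\Phi$ with $:\!\exp(\langle\cdot,\bm{\xi}\rangle)\!:$ term-by-term yields $\sum_{\bm{n}}\langle F_{\bm{n}},\bm{\xi}^{\otimes\bm{n}}/\bm{n}!\rangle$, which is exactly the Taylor series of $F(z\bm{\xi})$ evaluated at $z=1$ by construction of the kernels, hence equals $F(\bm{\xi})$. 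Uniqueness of $\Phi$ follows because the Wick exponentials $\{:\!\exp(\langle\cdot,\bm{\xi}\rangle)\!:\mid\bm{\xi}\in S_{d}\}$ span a dense subspace of $(S_{d})$, so two Hida distributions with the same $S$-transform must coincide. The main obstacle I anticipate is the kernel estimate via Cauchy's formula: to extract a genuine element of the Fock-type dual space, and not merely a multilinear bound, one must identify the correct continuous Hilbertian norm on $S_{d}$ dominating $\|\cdot\|$ (using the nuclear embedding $S_{d}\hookrightarrow H_{p}$) and track the combinatorial $\bm{n}!$ factors carefully so that the resulting series defining $\Phi$ sits in a single $(H_{p})^{-q}$.
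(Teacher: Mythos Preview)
The paper does not supply its own proof of this theorem; it is stated as a known characterization result with references to \cite{KLPSW96} and \cite{PS91}, and the subsequent text simply builds on it. So there is no in-paper argument to compare against.

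That said, your sketch is essentially the standard Potthoff--Streit proof from those references: analyticity and the exponential bound follow from the Fock-space norm estimates on Wick exponentials, and the inverse direction proceeds by extracting kernels via Cauchy's formula, bounding them with the growth condition, and assembling the chaos series in some fixed $(H_{p})^{-q}$. The only point I would flag is the passage from the homogeneous bound $|\tilde F_{n}(\bm{\varphi},\ldots,\bm{\varphi})|\le K_{1}(2eK_{2}/n)^{n/2}\|\bm{\varphi}\|^{n}$ to a genuine Hilbert--Schmidt kernel in $H_{-p}^{\hat\otimes n}$: polarization alone costs a factor $n^{n}/n!$, which would destroy the estimate, so in the standard argument one instead uses nuclearity (a Hilbert--Schmidt embedding $H_{p'}\hookrightarrow H_{p}$) to turn the continuous multilinear form directly into an element of the tensor dual without polarizing. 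You allude to this in your final sentence, but it is the one step where the proof can actually go wrong if handled carelessly.
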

Based on Theorem \ref{thm:S-transform-U-functional} a deeper analysis
of the space $(S_{d})'$ can be done. The following Corollaries concern
the convergence of sequences and the Bochner integration of families
of generalized functions in $(S_{d})'$ (for more details and proofs
see e.g.~\cite{HKPS93}, \cite{KLPSW96}, \cite{PS91}).
\begin{cor}
\label{cor:conv-hida-seq}Let $(\Phi_{n})_{n\in\mathbb{N}}$ be a
sequence in $(S_{d})'$ such that
\begin{enumerate}
\item for all $\bm{\varphi}\in S_{d}$, $((S\Phi_{n})(\bm{\varphi}))_{n\in\mathbb{N}}$
is a Cauchy sequence in $\mathbb{C}$,
\item there are $K_{1},K_{2}>0$ such that for some continuous norm $\|\cdot\|$
on $S_{d}$ one has
\[
|(S\Phi_{n})(z\bm{\varphi})|\leq K_{1}\exp\left(K_{2}|z|^{2}\|\bm{\varphi}\|^{2}\right),\quad\bm{\varphi}\in S_{d},\; n\in\mathbb{N},\,\, z\in\mathbb{{C}}.
\]

\end{enumerate}

Then $(\Phi_{n})_{n\in\mathbb{N}}$ converges strongly in $(S_{d})'$
to a unique Hida distribution.

\end{cor}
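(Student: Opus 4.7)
The plan is to first use the pointwise Cauchy hypothesis to define a candidate limit at the $S$-transform level, then verify this limit is a $U$-functional so that Theorem \ref{thm:S-transform-U-functional} furnishes a Hida distribution $\Phi$, and finally upgrade pointwise $S$-transform convergence to strong convergence of $(\Phi_n)$ in $(S_d)'$.

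First, by hypothesis (1), setting $F(\bm{\varphi}) := \lim_{n\to\infty}(S\Phi_n)(\bm{\varphi})$ defines a function $F: S_d \to \mathbb{C}$. To check that $F$ is a $U$-functional in the sense of Definition \ref{def:U-functional}, fix $\bm{\varphi}_1, \bm{\varphi}_2 \in S_d$ and consider $g_n(\lambda) := (S\Phi_n)(\lambda \bm{\varphi}_1 + \bm{\varphi}_2)$, which is entire in $\lambda$ because each $S\Phi_n$ is a $U$-functional by Theorem \ref{thm:S-transform-U-functional}. Hypothesis (2), combined with a routine complex-analytic estimate relating the real-axis bound to the entire extension, shows that $(g_n)$ is locally uniformly bounded on $\mathbb{C}$, hence a normal family. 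Vitali's theorem together with pointwise convergence of $g_n$ on $\mathbb{R}$ then yields locally uniform convergence to an entire limit, so $\lambda \mapsto F(\lambda \bm{\varphi}_1 + \bm{\varphi}_2)$ admits an entire extension. Passing to the limit $n\to\infty$ in the bound of hypothesis (2) gives condition (2) of Definition \ref{def:U-functional} for $F$. Theorem \ref{thm:S-transform-U-functional} then produces a unique $\Phi \in (S_d)'$ with $S\Phi = F$.

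The remaining and main task is to strengthen this to strong convergence $\Phi_n \to \Phi$ in $(S_d)'$. A quantitative reading of Theorem \ref{thm:S-transform-U-functional}, as established in \cite{KLPSW96,PS91}, shows that hypothesis (2) actually yields a uniform bound on $(\Phi_n)$ in a single Hilbert space of the Hida scale, with index depending only on $K_2$ and the seminorm $\|\cdot\|$. Banach--Alaoglu then makes the corresponding closed ball weakly sequentially compact, and the Wick exponentials $:\!\exp(\langle \cdot, \bm{\xi}\rangle)\!:$ with $\bm{\xi}\in S_d$ form a total subset of its predual; since the $S$-transforms are precisely the pairings with these exponentials and converge to the corresponding pairings with $\Phi$, every weak cluster point coincides with $\Phi$ and the full sequence converges weakly to $\Phi$. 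Nuclearity of the Gel'fand triple $(S_d) \subset L^2(\mu_d) \subset (S_d)'$ finally promotes this to strong convergence on bounded sets.

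I expect the principal obstacle to sit in this last step: Theorem \ref{thm:S-transform-U-functional} as stated is only an abstract bijection, whereas the compactness argument requires a quantitative correspondence between the $U$-functional constants $(K_1, K_2, \|\cdot\|)$ and explicit Hilbert-space norms on elements of $(S_d)'$. Establishing this link---effectively invoking the proof, not merely the statement, of the characterization theorem---is the crux, since without it one can only conclude pointwise convergence of $S\Phi_n$ on $S_d$ and weak convergence against Wick exponentials, not the strong convergence claimed in the corollary.
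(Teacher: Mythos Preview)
The paper does not actually prove Corollary~\ref{cor:conv-hida-seq}; it merely states the result and refers the reader to \cite{HKPS93}, \cite{KLPSW96}, and \cite{PS91} for details and proofs. So there is no ``paper's own proof'' against which to compare your attempt.

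That said, your outline is essentially the standard argument found in those references: define the pointwise limit $F$, use the uniform growth bound together with Vitali/Montel to check that $F$ is a $U$-functional, invoke the characterization theorem to obtain $\Phi$, and then pass from pointwise $S$-transform convergence to strong convergence via the quantitative norm estimates hidden in the proof of Theorem~\ref{thm:S-transform-U-functional} plus nuclearity of the Gel'fand triple. Your identification of the crux --- that one needs the \emph{quantitative} content of the characterization theorem (the explicit dependence of the Hilbert-space index on $K_{1},K_{2},\|\cdot\|$), not just its abstract statement --- is exactly right, and this is precisely what \cite{KLPSW96,PS91} supply. Nothing in your sketch is wrong or missing a key idea; it is the standard proof.
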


\begin{cor}
\label{cor:hida_integral}Let $(\Omega,\mathcal{B},m)$ be a measure
space and $\lambda\mapsto\Phi_{\lambda}$ be a mapping from $\Omega$
to $(S_{d})'$. We assume that the $S$-transform of $\Phi_{\lambda}$
fulfills the following two properties:
\begin{enumerate}
\item The mapping $\lambda\mapsto(S\Phi_{\lambda})(\mathbf{\text{\ensuremath{\bm{{\varphi}}}}})$
is measurable for every $\bm{{\varphi}}\in S_{d}$, 
\item The $S\Phi_{\lambda}$ obeys the estimate 
\[
|(S\Phi_{\lambda})(z\bm{\varphi})|\leq C_{1}(\lambda)\exp\left(C_{2}(\lambda)|z|^{2}\|\bm{\varphi}\Vert^{2}\right),\quad z\in\mathbb{C},\bm{\varphi}\in S_{d},
\]
 for some continuous norm $\Vert\cdot\Vert$ on $S_{d}$ and for $C_{1}\in L^{1}(\Omega,m)$,
$C_{2}\in L^{\infty}(\Omega,m)$. 
\end{enumerate}

Then 
\[
\int_{\Omega}\Phi_{\lambda}\, dm(\lambda)\in(S_{d})'
\]
 and 
\[
S\left(\int_{\Omega}\Phi_{\lambda}\, dm(\lambda)\right)(\bm{\varphi})=\int_{\Omega}(S\Phi_{\lambda})(\bm{\varphi})\, dm(\lambda),\quad\bm{\varphi}\in S_{d}.
\]

\end{cor}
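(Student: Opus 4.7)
The plan is to apply the characterization Theorem \ref{thm:S-transform-U-functional}: I will show that the map
\[
F(\bm{\varphi}):=\int_{\Omega}(S\Phi_{\lambda})(\bm{\varphi})\,dm(\lambda),\qquad\bm{\varphi}\in S_{d},
\]
is a well-defined $U$-functional. Once this is established, there is a unique $\Psi\in(S_{d})'$ with $S\Psi=F$, and by definition of the Bochner (Pettis) integral in $(S_{d})'$ this $\Psi$ is exactly $\int_{\Omega}\Phi_{\lambda}\,dm(\lambda)$, yielding both claims of the corollary.

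First I would verify that the defining integral makes sense. Measurability of $\lambda\mapsto(S\Phi_{\lambda})(\bm{\varphi})$ is assumption (1). Integrability follows from assumption (2) with $z=1$: the integrand is bounded in modulus by $C_{1}(\lambda)\exp(\Vert C_{2}\Vert_{\infty}\|\bm{\varphi}\|^{2})$, which is integrable since $C_{1}\in L^{1}(\Omega,m)$. Next I would check the growth estimate in Definition \ref{def:U-functional}(2): for any $\bm{\varphi}\in S_{d}$ and $z\in\mathbb{C}$,
\[
|F(z\bm{\varphi})|\le\int_{\Omega}C_{1}(\lambda)\exp\!\bigl(C_{2}(\lambda)|z|^{2}\|\bm{\varphi}\|^{2}\bigr)\,dm(\lambda)\le\Vert C_{1}\Vert_{L^{1}}\exp\!\bigl(\Vert C_{2}\Vert_{L^{\infty}}|z|^{2}\|\bm{\varphi}\|^{2}\bigr),
\]
so one can take $K_{1}=\Vert C_{1}\Vert_{L^{1}}$ and $K_{2}=\Vert C_{2}\Vert_{L^{\infty}}$ with the same continuous norm $\|\cdot\|$ on $S_{d}$ supplied by hypothesis (2).

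It remains to show the entire extension property in Definition \ref{def:U-functional}(1). For fixed $\bm{\varphi}_{1},\bm{\varphi}_{2}\in S_{d}$, each $\lambda$-slice $g_{\lambda}(z):=(S\Phi_{\lambda})(z\bm{\varphi}_{1}+\bm{\varphi}_{2})$ is entire in $z$ because $S\Phi_{\lambda}$ is itself a $U$-functional (by Theorem \ref{thm:S-transform-U-functional}). I would then apply Morera's theorem to $G(z):=\int_{\Omega}g_{\lambda}(z)\,dm(\lambda)$. For any closed triangle (or any closed rectifiable contour) $\gamma$ in $\mathbb{C}$, the double integral $\int_{\gamma}\int_{\Omega}|g_{\lambda}(z)|\,dm(\lambda)\,|dz|$ is finite thanks to the uniform-in-$\lambda$ bound of hypothesis~(2) restricted to the compact $\gamma$, so Fubini's theorem gives
\[
\oint_{\gamma}G(z)\,dz=\int_{\Omega}\oint_{\gamma}g_{\lambda}(z)\,dz\,dm(\lambda)=0,
\]
the inner integral vanishing by Cauchy's theorem. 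Continuity of $G$ on $\mathbb{C}$ is immediate from dominated convergence with the same majorant, so Morera yields the entire extension of $G$, i.e.\ of $\lambda\mapsto F(\lambda\bm{\varphi}_{1}+\bm{\varphi}_{2})$.

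The main obstacle I anticipate is the justification of the Fubini swap inside the Morera step, i.e.\ producing a single $m\otimes |dz|$-integrable majorant on a neighbourhood of the contour; this is exactly where the assumption $C_{2}\in L^{\infty}$ is essential (it keeps the exponential factor uniformly bounded on compact sets of $z$), while $C_{2}\in L^{1}$ would not suffice. With that interchange in hand, the rest is routine bookkeeping, and the final identity $S(\int\Phi_{\lambda}\,dm)=F$ holds by construction.
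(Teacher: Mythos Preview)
Your approach is correct and is precisely the standard argument; note that the paper does not give its own proof of this corollary but simply refers to \cite{HKPS93,KLPSW96,PS91}, where the proof proceeds exactly as you outline (verify that $F$ is a $U$-functional and invoke Theorem~\ref{thm:S-transform-U-functional}).

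One small point worth tightening in the Morera step: hypothesis~(2) bounds $|(S\Phi_{\lambda})(z\bm{\varphi})|$ for a \emph{single} test function $\bm{\varphi}$, whereas your majorant for $g_{\lambda}(z)=(S\Phi_{\lambda})(z\bm{\varphi}_{1}+\bm{\varphi}_{2})$ requires controlling a two-parameter combination. The passage is standard---using that each $S\Phi_{\lambda}$ is already a $U$-functional one obtains, e.g.\ via a Cauchy estimate or the elementary inequality $\|z\bm{\varphi}_{1}+\bm{\varphi}_{2}\|^{2}\le 2|z|^{2}\|\bm{\varphi}_{1}\|^{2}+2\|\bm{\varphi}_{2}\|^{2}$ applied after writing $z\bm{\varphi}_{1}+\bm{\varphi}_{2}=1\cdot(z\bm{\varphi}_{1}+\bm{\varphi}_{2})$, a bound of the form $C_{1}(\lambda)\exp\!\bigl(2\Vert C_{2}\Vert_{\infty}(|z|^{2}\|\bm{\varphi}_{1}\|^{2}+\|\bm{\varphi}_{2}\|^{2})\bigr)$---but you should state this explicitly rather than just invoking ``hypothesis~(2) restricted to $\gamma$''. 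With that detail filled in, your argument is complete and matches the cited references.
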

At the end of this section we introduce the notion of truncated kernels,
defined via their Wiener-Itô-Segal chaos decomposition.
\begin{defn}
\label{def:trunc_kern}For $\Phi\in(S_{d})'$ with kernels ${F_{\bm{n}}},\bm{{n}}\in\mathbb{{N}}_{0}^{d}$
and $k\in\mathbb{N}_{0}$ we define truncated Hida distribution by
\[
\Phi^{(k)}=\sum_{\bm{n}\in\mathbb{N}_{0}^{d}:n\geq k}\langle:\cdot^{\otimes\bm{n}}:,F_{\bm{n}}\rangle.
\]
 Obviously one has $\Phi^{(k)}\in(S_{d})'.$
\end{defn}

\section{Multifractional Brownian motion}

Multifractional Brownian motion (mBm) in dimension $1$, was introduced
by Peltier and Lévy Véhel \cite{Peltier:1995ug}, Benassi et al.~\cite{Benassi:1997jf}
as zero mean Gaussian process with covariance given by
\[
R_{h}(t,s)=\frac{C\left(\frac{h(t)+h(s)}{2}\right)^{2}}{C(h(t))C(h(s))}\left[\frac{1}{2}(t^{h(t)+h(s)}+s^{h(t)+h(s)}-|t-s|^{h(t)+h(s)})\right],
\]
where the normalizing constant is defined by 
\[
C(x):=\left(\frac{2\pi}{\Gamma(2x+1)\sin(\pi x)}\right)^{1/2},
\]
where $\Gamma$ is the Gamma function. Let us fix some notations.
By $\hat{u}$ we denote the Fourier transform of $u$ and let $L_{\mathrm{loc}}^{1}(\mathbb{R})$
be the set of measurable functions which are locally integrable in
$\mathbb{R}$. Each $f\in L_{\mathrm{loc}}^{1}(\mathbb{R})$ gives
rise to an element in $S'_{1}$, denoted by $T_{f}$ namely, for any
$\varphi\in S_{1}$, $\langle T_{f},\varphi\rangle=\int_{\mathbb{R}}f(x)\varphi(x)\, dx$. 

In order to obtain a realization of mBm in the framework of white
noise analysis we introduce the following operator, see \cite{Lebovits:2014es}.
For any $H\in(1/2,1)$ we define the operator 
\[
(\widehat{M_{H}u})(y)=\frac{\sqrt{2\pi}}{C(H)}|y|^{1/2-H}\hat{u}(y),
\]
where $\hat{u}$ denotes the Fourier transform of the function $u$.
The operator $M_{H}$ is well defined in the space 
\[
L_{H}^{2}(\mathbb{R}):=\{u\in S_{1}':\;\hat{u}=T_{f};\; f\in L_{\mathrm{loc}}^{1}(\mathbb{R})\;\mathrm{and}\;\|u\|_{H}<\infty\},
\]
where the norm 
\[
\|u\|_{H}^{2}:=\frac{1}{C(H)^{2}}\int_{\mathbb{R}}|x|^{1-2H}|\hat{u}(x)|^{2}\, dx
\]
is the inner product norm on the Hilbert space $L_{H}^{2}(\mathbb{R})$
given by
\[
(u,v)_{H}=\frac{1}{C(H)^{2}}\int_{\mathbb{R}}|x|^{1-2H}\hat{u}(x)\overline{\hat{v}}(x)\, dx.
\]
Another possible representation for the operator $M_{H}$ is as follows
\cite[Eq.~2.4]{Lebovits:2014es}
\begin{equation}
(M_{H}\varphi)(x)=\gamma(H)\big<|\cdot|^{H-3/2},\varphi(x+\cdot)\rangle=:\gamma(H)\big<\Theta_{H},\varphi(x+\cdot)\rangle,\quad\varphi\in S_{1}(\mathbb{R}),\label{eq:MH_gf}
\end{equation}
where 
\[
\gamma(H):=\frac{\sqrt{\Gamma(2H+1)\sin(\pi H)}}{2\Gamma(H-1/2)\cos(\pi(H-1/2)/2)}.
\]
Note that $\Theta_{H}$ is a generalized function from $S'_{1}$,
i.e. for any $\psi\in S_{1}$ we have $\langle\Theta_{H},\psi\rangle=\langle|y|^{H-3/2},\psi(y)\rangle.$ 

The operator $M_{H}$ establishes an isometry between the Hilbert
spaces $L_{H}^{2}(\mathbb{R})$ and $L^{2}(\mathbb{R})$ \cite[Prop.~2.10]{Lebovits:2014es}.
Below we review some useful properties of the operator $M_{H}$. 
\begin{prop}
\label{prop:Properties_M_H}Let $M_{H}$ be the operator defined above
and $H\in(1/2,1)$.
\begin{enumerate}
\item Then $M_{H}$ is an isometric isomorphism between the Hilbert spaces
$L_{H}^{2}(\mathbb{R})$ and $L^{2}(\mathbb{R})$.
\item For any $f,g\in L^{2}(\mathbb{R})\cap L_{H}^{2}(\mathbb{R})$ we have
\[
\int_{\mathbb{R}}f(x)(M_{H}g)(x)\, dx=\int_{\mathbb{R}}(M_{H}f)(x)g(x)\, dx.
\]
Moreover, for any $f\in L_{\mathrm{loc}}^{1}(\mathbb{R})\cap L_{H}^{2}(\mathbb{R})$
and $g\in S_{1}$, we have
\[
\langle f,M_{H}g\rangle=(M_{H}f,g)_{L^{2}(\mathbb{R})}.
\]

\item There exists a constant $D$ such that for every $k\in\mathbb{N}_{0}:=\mathbb{N}\cup\{0\}$
we have
\[
\max_{x\in\mathbb{R}}|(M_{H}e_{k})(x)|\leq\frac{D}{C(H)}(k+1)^{2/3},
\]
where $e_{k}:=M_{H}^{-1}h_{k}$ and $h_{k}$ is the k-th Hermite function. 
\item There exists $p\in\mathbb{N}$ such that for all $t\in[0,T]$ and
$\varphi\in S_{1}$ we have
\[
\left|\int_{\mathbb{R}}\varphi(x)(M_{H}1\!\!1_{[0,t)})(x)\, dx\right|\leq|\gamma(H)|\,|\Theta_{H}|_{-p}\, t\,\sup_{x\in\mathbb{R}}|\varphi(x+\cdot)|_{p},
\]
with $\Theta_{H}$ as in (\ref{eq:MH_gf}).
\end{enumerate}
\end{prop}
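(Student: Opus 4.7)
The plan is to handle parts (1) and (2) directly from the Fourier-multiplier definition of $M_H$ via Plancherel, then reduce part (4) to part (2) combined with the representation (\ref{eq:MH_gf}); part (3) is a standard pointwise bound on Hermite functions.

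For (1), the defining identity $\widehat{M_H u}(y) = \tfrac{\sqrt{2\pi}}{C(H)}|y|^{1/2-H}\hat u(y)$ together with Plancherel gives
\[
\|M_H u\|_{L^2}^2 \;=\; \frac{1}{2\pi}\int_{\mathbb R}\Bigl|\tfrac{\sqrt{2\pi}}{C(H)}|y|^{1/2-H}\hat u(y)\Bigr|^2 dy \;=\; \frac{1}{C(H)^{2}}\int_{\mathbb R}|y|^{1-2H}|\hat u(y)|^2\,dy \;=\; \|u\|_H^2,
\]
so $M_H$ is an isometry. Surjectivity follows by inverting the Fourier multiplier: for $g\in L^2(\mathbb R)$, the function $u$ with $\hat u(y) = \tfrac{C(H)}{\sqrt{2\pi}}|y|^{H-1/2}\hat g(y)$ is locally integrable (since $H-1/2\in(0,1/2)$), lies in $L_H^2(\mathbb R)$, and satisfies $M_H u = g$. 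For (2), since $|y|^{1/2-H}$ is real and even, Parseval sends both $\int f\,(M_H g)\,dx$ and $\int (M_H f)\,g\,dx$ to the same spectral integral $\tfrac{1}{\sqrt{2\pi}\,C(H)}\int|y|^{1/2-H}\hat f(y)\overline{\hat g(y)}\,dy$. The extension to $f\in L^1_{\mathrm{loc}}\cap L_H^2$ and $g\in S_1$ is a duality/density argument, reading the left-hand side as the dual pairing $\langle f, M_H g\rangle$ and the right-hand side as an honest $L^2$ inner product (since $M_H f\in L^2$ by (1)). Part (3) is then immediate from $M_H e_k = h_k$ together with a standard pointwise estimate on Hermite functions, the factor $1/C(H)$ being absorbed into $D$ along with the Fourier normalization.

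The main content is part (4). The plan is to first apply (2) with $f = 1\!\!1_{[0,t)}$, after checking $1\!\!1_{[0,t)} \in L^1_{\mathrm{loc}} \cap L_H^2(\mathbb R)$ for $H\in(1/2,1)$, to transfer $M_H$ onto $\varphi$ and obtain
\[
\int_{\mathbb R}\varphi(x)(M_H 1\!\!1_{[0,t)})(x)\,dx \;=\; \int_0^t (M_H\varphi)(x)\,dx.
\]
Then (\ref{eq:MH_gf}) together with the fact that $\Theta_H \in H_{-p}$ for some $p\in\mathbb N$ yields the pointwise bound $|(M_H\varphi)(x)| \leq |\gamma(H)|\,|\Theta_H|_{-p}\,|\varphi(x+\cdot)|_p$; integrating over $[0,t]$ and replacing the integrand by its supremum in $x$ gives the stated inequality. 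The main obstacle is precisely the initial transfer step: one needs the extended version of (2), applicable to the non-Schwartz indicator $1\!\!1_{[0,t)}$, which is why the four parts must be proved in the given order.
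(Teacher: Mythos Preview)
Your proposal is correct and matches the paper's approach. The paper simply cites \cite{Lebovits:2014es} for items (1)--(3) and, for item (4), states that it follows from (2) together with the representation (\ref{eq:MH_gf}) and the Cauchy--Schwarz inequality --- which is exactly your argument: transfer $M_H$ onto $\varphi$ via the duality identity in (2), then bound $(M_H\varphi)(x)=\gamma(H)\langle\Theta_H,\varphi(x+\cdot)\rangle$ by $|\gamma(H)|\,|\Theta_H|_{-p}\,|\varphi(x+\cdot)|_p$ and integrate over $[0,t]$.
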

\begin{proof}
The items 1., 2.~and 3.~are proved in \cite[Thm~2.14, 2.15]{Lebovits:2014es}.
Assertion $4.$~is a direct consequence of 2.~under the use of the
Cauchy-Schwarz inequality and the representation (\ref{eq:MH_gf})
for the operator $M_{H}$.
\end{proof}
Next we define the operator $M_{H}$ for a measurable functional parameter
$h:[0,T]\longrightarrow(1/2,1)$. For two indicator functions we define
\[
R_{h}(t,s):=(1\!\!1_{[0,t)},1\!\!1_{[0,s)})_{h}:=\frac{1}{C(h(t))C(h(s))}\int_{\mathbb{R}}|x|^{1-2h(x)}\hat{1\!\!1}_{[0,t)}(x)\overline{\hat{1\!\!1}}_{[0,s)}(x)\, dx,
\]
which can be extended by linearity to the pre-Hilbert space of simple
functions $(\mathcal{E}(\mathbb{R}),(\cdot,\cdot)_{h})$.
\begin{description}
\item [{(A1)}] From now on we assume that $h:[0,T]\longrightarrow(1/2,1)$
is a continuous function.\end{description}
\begin{rem}
\noindent For all $h:[0,T]\longrightarrow(1/2,1)$ satisfying (A1),
the bilinear form $(\cdot,\cdot)_{h}$ is an inner product. See \cite[Prop.~3.1]{Lebovits:2014es}.
Moreover we define the linear map $M_{h}:\mathcal{E}(\mathbb{R})\longrightarrow L^{2}(\mathbb{R}),\;1\!\!1_{[0,t)}\mapsto M_{h}1\!\!1_{[0,t)}:=M_{h(t)}1\!\!1_{[0,t)}:=M_{H}1\!\!1_{[0,t)}|_{H=h(t)}.$ \end{rem}
\begin{defn}
For $h$ satisfying (A1) we define the $L^{2}(\mu_{1})$ random variable
$B_{h}(t)$ by 
\[
B_{h}(t)=\langle\cdot,M_{h}1\!\!1_{[0,t)}\rangle.
\]

\begin{enumerate}
\item One can show that the process $(\omega,t)\mapsto B_{h}(\omega,t)$
is a one-dimensional mBm, see \cite{Lebovits:2014es}. 
\item There is a continuous version of the process $B_{h}(t)$ by Kolmogorov's
theorem and we use the same notation for this continuous version. 
\end{enumerate}
\end{defn}
\begin{rem}
The completion of $\mathcal{E}(\mathbb{R})$ with respect to $(\cdot,\cdot)_{h}$
is a Hilbert space denoted by $L_{h}^{2}(\mathbb{R})$. Moreover the
operator $M_{h}$ is an isometry between $(\mathcal{E}(\mathbb{R}),(\cdot,\cdot)_{h})$
and $(L^{2}(\mathbb{R}),(\cdot,\cdot))$, which can be extended to
an isometry between $L_{h}^{2}(\mathbb{R})$ and $L^{2}(\mathbb{R})$.
\end{rem}
The next proposition shows certain properties of 1-dimensional mBm.
\begin{prop}
The process $B_{h}(t)$, $t\geq0$ has the following properties.
\begin{enumerate}
\item The characteristic function of $B_{h}(t)$ is given by
\begin{eqnarray*}
\mathbb{E}(e^{i\lambda B_{h}(t)}) & = & \int_{S'_{1}(\mathbb{R})}e^{i\lambda\langle w,M_{h}1\!\!1_{[0,t)}\rangle}\, d\mu_{1}(w)=\exp\left(-\frac{\lambda^{2}}{2}|M1\!\!1_{[0,t)}|_{0}^{2}\right)\\
 & = & \exp\left(-\frac{\lambda^{2}}{2}t^{2h(t)}\right).
\end{eqnarray*}

\item The expectation of $B_{h}(t)$ is zero. 
\item The variance of $B_{h}(t)$ is given by
\[
\mathbb{E}(B_{h}^{2}(t))=t^{2h(t)}.
\]

\item The covariance of $B_{h}(t)$ is
\begin{eqnarray*}
R_{h}(t,s) & := & \mathbb{E}(B_{h}(t)B_{h}(s))\\
 & = & \frac{C\left(\frac{h(t)+h(s)}{2}\right)^{2}}{C(h(t))C(h(s))}\left[\frac{1}{2}(t^{h(t)+h(s)}+s^{h(t)+h(s)}-|t-s|^{h(t)+h(s)})\right].
\end{eqnarray*}

\end{enumerate}
\end{prop}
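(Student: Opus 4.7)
The plan is to exploit the fact that $B_{h}(t)=\langle\cdot,M_{h}1\!\!1_{[0,t)}\rangle$ is, in the white noise sense, a centered Gaussian random variable whose variance equals the squared $L^{2}(\mathbb{R})$-norm of the kernel $M_{h}1\!\!1_{[0,t)}$. Since by construction $M_{h}1\!\!1_{[0,t)}=M_{h(t)}1\!\!1_{[0,t)}$ and $M_{h(t)}$ is the isometry $L_{h(t)}^{2}(\mathbb{R})\to L^{2}(\mathbb{R})$ from Proposition~\ref{prop:Properties_M_H}(1), the kernel lies in $L^{2}(\mathbb{R})$ and all four items should follow from the standard bilinear calculus on white noise together with a single explicit Fourier computation.

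First I would handle items 1--3. With $\varphi:=M_{h}1\!\!1_{[0,t)}\in L^{2}(\mathbb{R})$, Minlos's characteristic functional, extended from $S_{1}$ to $L^{2}(\mathbb{R})$ by density and dominated convergence, yields
\[
\mathbb{E}\bigl(e^{i\lambda B_{h}(t)}\bigr)=\exp\!\Bigl(-\tfrac{\lambda^{2}}{2}\,|\varphi|_{0}^{2}\Bigr).
\]
The isometry then gives $|\varphi|_{0}^{2}=(1\!\!1_{[0,t)},1\!\!1_{[0,t)})_{h}=R_{h}(t,t)$, and specializing $s=t$ in the defining Fourier integral collapses everything to the classical fBm variance at the constant Hurst index $H=h(t)$, namely $t^{2h(t)}$. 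Items 2 and 3 drop out by evaluating the first and second $\lambda$-derivatives of the characteristic function at $\lambda=0$.

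For item 4 I would invoke the polarization identity for centered Gaussian variables in the first Wiener chaos, which gives
\[
\mathbb{E}\bigl(B_{h}(t)B_{h}(s)\bigr)=\bigl(M_{h}1\!\!1_{[0,t)},M_{h}1\!\!1_{[0,s)}\bigr)_{L^{2}(\mathbb{R})}=(1\!\!1_{[0,t)},1\!\!1_{[0,s)})_{h}=R_{h}(t,s),
\]
the middle equality being the bilinear extension of the isometry. To convert the integral defining $R_{h}(t,s)$ into the stated closed form, I would insert $\hat{1\!\!1}_{[0,t)}(x)=(1-e^{-itx})/(ix)$, symmetrize in the frequency variable, and appeal to the classical identity $\int_{\mathbb{R}}|x|^{-2H-1}(1-\cos(ux))\,dx=c_{H}|u|^{2H}$ with $H=(h(t)+h(s))/2$ and an explicit Gamma--sine constant $c_{H}$. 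This produces precisely the fBm-type increment kernel $\tfrac{1}{2}\bigl(t^{h(t)+h(s)}+s^{h(t)+h(s)}-|t-s|^{h(t)+h(s)}\bigr)$.

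The only delicate bookkeeping will be verifying that the prefactor $1/(C(h(t))C(h(s)))$ coming from the definition of $(\cdot,\cdot)_{h}$ combines with $c_{H}$ to reproduce the announced coefficient $C((h(t)+h(s))/2)^{2}/(C(h(t))C(h(s)))$. This is a routine but slightly tedious identity among Gamma functions and sines, and it is the only point at which the explicit shape of $C(\cdot)$ actually enters; every other step is formal and rests solely on the isometry properties of $M_{h}$ and the Gaussian structure of $\mu_{1}$.
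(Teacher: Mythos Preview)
The paper states this proposition without proof; it is presented as a list of immediate consequences of the white noise construction of $B_{h}(t)=\langle\cdot,M_{h}1\!\!1_{[0,t)}\rangle$ together with the isometry of $M_{h}$ and the already-stated definition of $R_{h}(t,s)$. Your proposal is correct and supplies precisely the details the paper leaves implicit: the extension of the Minlos characteristic functional from $S_{1}$ to $L^{2}(\mathbb{R})$ for item~1, differentiation at $\lambda=0$ for items~2 and~3, and the identity $\mathbb{E}(\langle\cdot,f\rangle\langle\cdot,g\rangle)=(f,g)_{L^{2}}$ combined with the isometry $M_{h}:(\mathcal{E}(\mathbb{R}),(\cdot,\cdot)_{h})\to L^{2}(\mathbb{R})$ for item~4. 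Note that in the paper's logic the equality $(1\!\!1_{[0,t)},1\!\!1_{[0,s)})_{h}=R_{h}(t,s)$ and the closed form with the $C(\cdot)$-prefactor are taken as \emph{definitions} (or imported from \cite{Lebovits:2014es,Peltier:1995ug,Benassi:1997jf}) rather than derived, so your Fourier computation and the Gamma--sine bookkeeping in the last paragraph go beyond what the paper itself verifies.
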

In Figures \ref{fig:fig1} and \ref{fig:fig2} the method of Wood
and Chan \cite{WoodChan} was used to simulate mBm on the interval
$[0,1]$ for different Hurst parameter functionals. 

\begin{figure}
\begin{centering}
\includegraphics[bb=0bp 150bp 612bp 655bp,clip,scale=0.5]{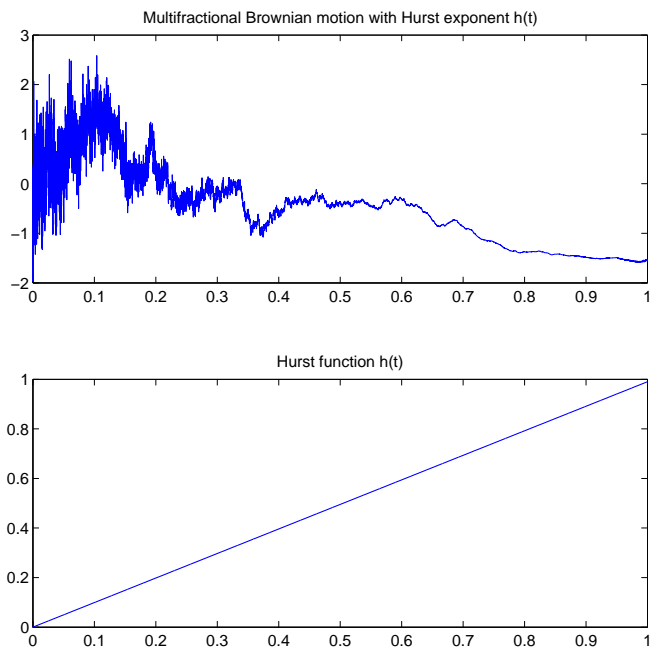}
\par\end{centering}

\noindent \protect\caption{\label{fig:fig1}Multifractional Brownian motion for parameter functional
$h.$ Simulated with the method of Wood and Chan with $s=10000$ discretization
points. Here the Hölder continuity of the path is linearly increasing
in time. }
\end{figure}

\begin{figure}
\begin{centering}
\includegraphics[bb=0bp 150bp 612bp 652bp,clip,scale=0.5]{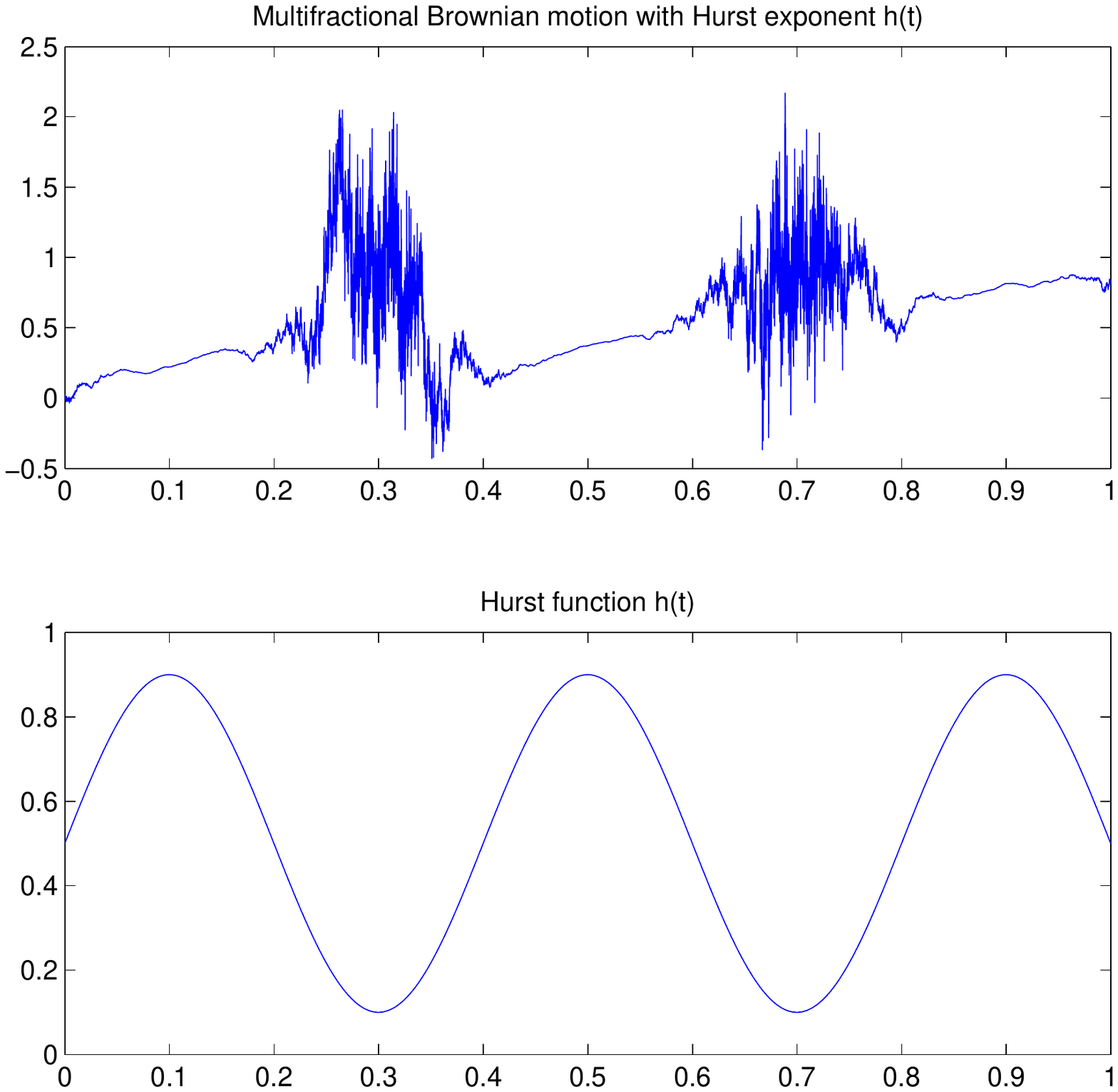}
\par\end{centering}

\protect\caption{\label{fig:fig2}Multifractional Brownian motion for parameter functional
$h$. Simulated with the method of Wood and Chan with $s=10000$ discretization
points. Here the Hölder continuity of the path is the function $h(t)=0.4+0.5\sin(5\pi t).$}

\end{figure}

Now we are ready to define the $d$-dimensional multifractional Brownian
motion.
\begin{defn}
[$d$-dimensional mBm] Let $h$ satisfy (A1). A $d$-dimensional
mBm with functional parameter $h$ is defined by
\[
\bm{B}_{h}(t)=\big(B_{h,1}(t),\ldots,B_{h,d}(t)\big),\quad t\geq0,
\]
where $B_{h,i}(t)$, $i=1,\ldots,d$, are $d$ independent $1$-dimensional
mBms.
\end{defn}
Properties of $\bm{B}_{h}(t)$:
\begin{enumerate}
\item The expectation is zero
\[
\mathbb{E}\big(\bm{B}_{h}(t)\big)=0.
\]

\item The characteristic function of $\bm{B}_{h}(t)$ is given, for any
$\bm{x}\in\mathbb{R}^{d}$, by 
\begin{eqnarray*}
\int_{S'_{d}}e^{i(\bm{\lambda},\bm{B}_{h}(\bm{w},t))_{\mathbb{R}^{d}}}\, d\mu_{d}(\bm{w}) & = & \exp\left(-\frac{1}{2}\sum_{k=1}^{d}x_{k}^{2}|M_{h}1\!\!1_{[0,t)}|_{0}^{2}\right)\\
 & = & \exp\left(-\frac{1}{2}t^{2h(t)}|\bm{x}|_{\mathbb{R}^{d}}^{2}\right).
\end{eqnarray*}

\item Covariance matrix of $\bm{B}_{h}(t)$: 
\[
\mathrm{cov}(\bm{B}_{h}(t))=(\delta_{ij}t^{2h(t)})_{i,j=1}^{d}.
\]

\end{enumerate}

\section{Local time}

The time a process spends in a certain point $\bm{y}\in\mathbb{R}^{d}$
is called the local time of the process. Formally the local time is
given by the time integral over a Dirac delta function of the process.
This Donsker's delta function is a well-defined and studied object
in white noise analysis, see e.g. \cite{LLSW93,HKPS93,O94,Kuo96}.
For mBm recently the concept of local times was introduced and studied,
see e.g. \cite{boufoussi2007,meerschaert2008,ayache2011} and references
therein.

In this section we determine the kernel functions for the local time
and the regularized local time of mBm. Moreover we show that for a
suitable number of truncated kernel functions the local time of mBm
is a Hida distribution and can be obtained as a limit of the regularized
local time.
\begin{prop}
For $t>0$ the Bochner integral
\begin{equation}
\delta(\bm{B}_{h}(t)):=\left(\frac{1}{2\pi}\right)^{d}\int_{\mathbb{R}^{d}}e^{i(\bm{\lambda},\bm{B}_{h}(t))_{\mathbb{R}^{d}}}\, d\bm{\lambda}\label{eq:Donsker-delta}
\end{equation}
is a Hida distribution and for any $\boldsymbol{\varphi}\in S_{d}$
its $S$-transform given by 
\begin{equation}
S\delta(\bm{B}_{h}(t))(\bm{\varphi})=\left(\frac{1}{\sqrt{2\pi}\, t^{h(t)}}\right)^{d}\exp\left(-\frac{1}{2t^{2h(t)}}\left|\int_{\mathbb{R}}\bm{\varphi}(x)(M_{h}1\!\!1_{[0,t)})(x)\, dx\right|_{\mathbb{R}^{d}}^{2}\right).\label{eq:S-transf-delta}
\end{equation}
\end{prop}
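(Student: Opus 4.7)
The plan is to read the right-hand side of \eqref{eq:Donsker-delta} as a Bochner integral of the mapping $\bm{\lambda}\mapsto\Phi_{\bm{\lambda}}:=\exp(i(\bm{\lambda},\bm{B}_{h}(t))_{\mathbb{R}^d})$ into $(S_{d})'$, and to invoke Corollary~\ref{cor:hida_integral} with $(\Omega,m)=(\mathbb{R}^d,d\bm{\lambda}/(2\pi)^d)$. Once the hypotheses of that corollary are checked, both the fact that $\delta(\bm{B}_{h}(t))\in(S_{d})'$ and the $S$-transform formula \eqref{eq:S-transf-delta} will follow by pulling the $S$-transform under the integral and evaluating the resulting Gaussian integral in $\bm{\lambda}$.

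First I would compute the $S$-transform of the integrand pointwise in $\bm{\lambda}$. Since the coordinates $B_{h,i}(t)$ are independent and each equals $\langle\cdot_{i},M_{h}1\!\!1_{[0,t)}\rangle$, the standard formula for the $S$-transform of a Wick/ordinary exponential of a first-chaos element gives
\[
(S\Phi_{\bm{\lambda}})(\bm{\varphi})=\exp\!\left(-\tfrac{1}{2}|\bm{\lambda}|_{\mathbb{R}^d}^{2}\,|M_{h}1\!\!1_{[0,t)}|_{0}^{2}+i\,\bm{\lambda}\cdot J_{t}(\bm{\varphi})\right),
\]
where $J_{t}(\bm{\varphi}):=\int_{\mathbb{R}}\bm{\varphi}(x)(M_{h}1\!\!1_{[0,t)})(x)\,dx\in\mathbb{R}^d$, and where $|M_{h}1\!\!1_{[0,t)}|_{0}^{2}=t^{2h(t)}$ by the isometry property in Proposition~\ref{prop:Properties_M_H}(1) together with the characteristic function computation already recorded. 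The map $\bm{\lambda}\mapsto(S\Phi_{\bm{\lambda}})(\bm{\varphi})$ is continuous, hence measurable, so the first hypothesis of Corollary~\ref{cor:hida_integral} is immediate.

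The main technical step is the uniform bound needed in Corollary~\ref{cor:hida_integral}(2). Replacing $\bm{\varphi}$ by $z\bm{\varphi}$ with $z\in\mathbb{C}$ and taking absolute values,
\[
|(S\Phi_{\bm{\lambda}})(z\bm{\varphi})|\leq\exp\!\left(-\tfrac{1}{2}t^{2h(t)}|\bm{\lambda}|^{2}+|z|\,|\bm{\lambda}|\,|J_{t}(\bm{\varphi})|\right),
\]
and an elementary completion of squares (absorbing half of the quadratic term) yields
\[
|(S\Phi_{\bm{\lambda}})(z\bm{\varphi})|\leq\underbrace{\exp\!\left(-\tfrac{1}{4}t^{2h(t)}|\bm{\lambda}|^{2}\right)}_{=:C_{1}(\bm{\lambda})}\cdot\exp\!\left(\tfrac{|z|^{2}|J_{t}(\bm{\varphi})|^{2}}{t^{2h(t)}}\right).
\]
Here $C_{1}\in L^{1}(\mathbb{R}^d,d\bm{\lambda})$ since it is a Gaussian in $\bm{\lambda}$, and the $\bm{\lambda}$-independent prefactor multiplying $|z|^2$ is controlled by a continuous norm on $S_{d}$: applying Proposition~\ref{prop:Properties_M_H}(4) componentwise gives $|J_{t}(\bm{\varphi})|\leq c(t)\,\|\bm{\varphi}\|$ for a suitable Schwartz norm, so the exponent has the required form $C_{2}|z|^{2}\|\bm{\varphi}\|^{2}$ with $C_{2}$ constant (hence trivially in $L^{\infty}$). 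This is the step I expect to be the main obstacle, because one must choose the splitting carefully so that the $\bm{\lambda}$-dependence ends up in the $L^{1}$ factor while the $\bm{\varphi}$-dependence ends up in the $L^{\infty}$ factor.

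Having verified the hypotheses, Corollary~\ref{cor:hida_integral} delivers $\delta(\bm{B}_{h}(t))\in(S_{d})'$ together with
\[
S\delta(\bm{B}_{h}(t))(\bm{\varphi})=\frac{1}{(2\pi)^d}\int_{\mathbb{R}^d}\exp\!\left(-\tfrac{1}{2}t^{2h(t)}|\bm{\lambda}|^{2}+i\,\bm{\lambda}\cdot J_{t}(\bm{\varphi})\right)d\bm{\lambda}.
\]
Evaluating the Gaussian integral in $\bm{\lambda}$ produces the factor $(2\pi/t^{2h(t)})^{d/2}\exp(-|J_{t}(\bm{\varphi})|^{2}/(2t^{2h(t)}))$, and combining with the $(2\pi)^{-d}$ prefactor yields exactly \eqref{eq:S-transf-delta}.
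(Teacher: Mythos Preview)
Your proof is correct and follows essentially the same route as the paper: compute the $S$-transform of the integrand, verify measurability in $\bm{\lambda}$, split the Gaussian exponent and complete the square to obtain an $L^{1}$ factor $\exp(-\tfrac14 t^{2h(t)}|\bm{\lambda}|^{2})$ times an $L^{\infty}$ factor of the form $\exp(C|z|^{2}\|\bm{\varphi}\|^{2})$, then apply Corollary~\ref{cor:hida_integral} and evaluate the Gaussian integral. The only cosmetic difference is that the paper bounds $|J_{t}(\bm{\varphi})|^{2}/t^{2h(t)}$ directly by $|\bm{\varphi}|_{0}^{2}$ via Cauchy--Schwarz and the isometry $|M_{h}1\!\!1_{[0,t)}|_{0}^{2}=t^{2h(t)}$, rather than invoking Proposition~\ref{prop:Properties_M_H}(4); either estimate suffices for the hypotheses of the corollary.
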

\begin{proof}
First we compute the $S$-transform of the integrand in (\ref{eq:Donsker-delta})
for any $\boldsymbol{{\varphi}}\in S_{d}$:
\begin{equation}
Se^{i(\bm{\lambda},\bm{B}_{h}(t))_{\mathbb{R}^{d}}}(\boldsymbol{{\varphi}})=\exp\left(-\frac{1}{2}|\bm{\lambda}|_{\mathbb{R}^{d}}^{2}t^{2h(t)}+i\Big(\bm{\lambda},\int_{\mathbb{R}}\boldsymbol{{\varphi}}(x)(M_{h}1\!\!1_{[0,t)})(x)\, dx\Big)_{\mathbb{R}^{d}}\right).\label{eq:S-transf-delta-integ}
\end{equation}
It is clear that the $S$-transform is $\bm{\lambda}$-measurable
for any $\bm{{\varphi}}\in S_{d}$. On the other hand, for any $z\in\mathbb{C}$
and all $\bm{\varphi}\in S_{d}$ we obtain
\begin{eqnarray*}
 &  & |Se^{i(\bm{\lambda},\bm{B}_{h}(t))_{\mathbb{R}^{d}}}(z\bm{\varphi})|\\
 & = & \left|\exp\left(-\frac{1}{2}|\bm{\lambda}|_{\mathbb{R}^{d}}^{2}t^{2h(t)}+iz\Big(\bm{\lambda},\int_{\mathbb{R}}\bm{\varphi}(x)(M_{h}1\!\!1_{[0,t)})(x)\, dx\Big)_{\mathbb{R}^{d}}\right)\right|\\
 & \leq & \exp\left(-\frac{1}{4}|\bm{\lambda}|_{\mathbb{R}^{d}}^{2}t^{2h(t)}\right)\exp\left(-\frac{1}{4}|\bm{\lambda}|_{\mathbb{R}^{d}}^{2}t^{2h(t)}+|z|\left|\Big(\bm{\lambda},\int_{\mathbb{R}}\bm{\varphi}(x)(M_{h}1\!\!1_{[0,t)})(x)\, dx\Big)_{\mathbb{R}^{d}}\right|\right)\\
 & \leq & \exp\left(-\frac{1}{4}|\bm{\lambda}|_{\mathbb{R}^{d}}^{2}t^{2h(t)}\right)\exp\left(-\left(\frac{1}{2}|\bm{\lambda}|_{\mathbb{R}^{d}}t^{h(t)}-\frac{1}{t^{h(t)}}|z|\left|\int_{\mathbb{R}}\bm{\varphi}(x)(M_{h}1\!\!1_{[0,t)})(x)\, dx\right|\right)^{2}\right)\\
 &  & \times\exp\left(\frac{1}{t^{2h(t)}}|z|^{2}\left|\int_{\mathbb{R}}\bm{\varphi}(x)(M_{h}1\!\!1_{[0,t)})(x)\, dx\right|_{\mathbb{R}^{d}}^{2}\right)\\
 & \leq & \exp\left(-\frac{1}{4}|\bm{\lambda}|_{\mathbb{R}^{d}}^{2}t^{2h(t)}\right)\exp\left(\frac{1}{t^{2h(t)}}|z|^{2}\left|\int_{\mathbb{R}}\bm{\varphi}(x)(M_{h}1\!\!1_{[0,t)})(x)\, dx\right|_{\mathbb{R}^{d}}^{2}\right)\\
 & \leq & \exp\left(-\frac{1}{4}|\bm{\lambda}|_{\mathbb{R}^{d}}^{2}t^{2h(t)}\right)\exp\left(|z|^{2}|\bm{\varphi}|{}_{0}^{2}\right).
\end{eqnarray*}
Thus we have the following bound
\[
|Se^{i(\bm{\lambda},\bm{B}_{h}(t))_{\mathbb{R}^{d}}}(z\bm{\varphi})|\leq\exp\left(-\frac{1}{4}|\bm{\lambda}|_{\mathbb{R}^{d}}^{2}t^{2h(t)}\right)\exp\left(|z|^{2}|\bm{\varphi}|_{0}^{2}\right),
\]
where, as a function of $\bm{\lambda}$, the first factor is integrable
on $\mathbb{R}^{d}$ and the second factor is constant. The result
(\ref{eq:S-transf-delta}) follows from (\ref{eq:S-transf-delta-integ})
and integration with respect to $\bm{\lambda}$. 
\end{proof}
In the following theorem we characterize the truncated local time
of mBm as a Hida distribution. Therefore we use the notation $\delta^{(N)}$
for the truncated Donsker's delta function as in \ref{def:trunc_kern},
i.e.~for any $\bm{\varphi}\in S_{d}$

\[
S\delta^{(N)}(\bm{B}_{h}(t))(\bm{\varphi})=\left(\frac{1}{\sqrt{2\pi}\, t^{h(t)}}\right)^{d}\exp_{N}\left(-\frac{1}{2t^{2h(t)}}\left|\int_{\mathbb{R}}\bm{\varphi}(x)(M_{h}1\!\!1_{[0,t)})(x)\, dx\right|_{\mathbb{R}^{d}}^{2}\right),
\]

with $\exp_{N}(x):=\sum_{n=N}^{\infty}\frac{{1}}{n!}x^{n}$ for $x\in\mathbb{C}.$
\begin{description}
\item [{(A2)}] Let $h$ satisfy (A1) and $\sup_{t\in[0,T)}h(t)<\frac{1+2N}{2N+d}$,
for a fixed $N\in\mathbb{N}_{0}$ and $d\in\mathbb{N}$.\end{description}
\begin{thm}
\label{thm:LT_Hida_distrib}For $h$ satisfying (A2) with dimension
$d\in\mathbb{N}$ and $N\in\mathbb{{N}}_{0}$ , the Bochner integral
\[
L_{h}^{(N)}(T):=\int_{0}^{T}\delta^{(N)}(\bm{B}_{h}(t))\, dt
\]
is a Hida distribution.\end{thm}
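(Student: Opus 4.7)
The plan is to apply Corollary \ref{cor:hida_integral} to the family $\Phi_t := \delta^{(N)}(\bm{B}_h(t))$ parametrised by $t \in [0,T]$ with Lebesgue measure. This reduces the theorem to verifying measurability of the $S$-transform in $t$ together with the pointwise exponential bound with an $L^1$-integrable prefactor.

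Measurability of $t \mapsto S\delta^{(N)}(\bm{B}_h(t))(\bm{\varphi})$ follows at once from the explicit formula displayed just before the statement: continuity of $h$ on $[0,T]$ renders both $t \mapsto t^{2h(t)}$ and $t \mapsto A_t(\bm{\varphi}) := \int_{\mathbb{R}} \bm{\varphi}(x)(M_h 1\!\!1_{[0,t)})(x)\,dx$ continuous, and composing with $\exp_N$ and the Gaussian prefactor preserves measurability.

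For the exponential estimate I would combine two elementary inequalities. First, $|\exp_N(-y)| \leq \frac{y^N}{N!}e^{y}$ for $y\geq 0$, applied with $y = |z|^2 |A_t(\bm{\varphi})|_{\mathbb{R}^d}^2/(2t^{2h(t)})$. Second, by Proposition \ref{prop:Properties_M_H}.4 applied componentwise, there exist $p \in \mathbb{N}$ and a continuous seminorm $\|\cdot\|$ on $S_d$ such that $|A_t(\bm{\varphi})|_{\mathbb{R}^d} \leq |\gamma(h(t))|\,|\Theta_{h(t)}|_{-p}\, t\, \|\bm{\varphi}\|$; since $h([0,T])$ is a compact subset of $(1/2,1)$, the quantities $|\gamma(h(t))|$, $|\Theta_{h(t)}|_{-p}$ and $t^{2-2h(t)}$ remain uniformly bounded on $[0,T]$. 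Applying the inequality $u^N/N! \leq e^u$ to the polynomial factor $|z|^{2N}\|\bm{\varphi}\|^{2N}$ then produces the $U$-functional form
\begin{equation*}
|S\delta^{(N)}(\bm{B}_h(t))(z\bm{\varphi})| \leq C\, t^{2N - (2N+d)h(t)} \exp\bigl(C_2\, |z|^2\|\bm{\varphi}\|^2\bigr),
\end{equation*}
where $C_2$ is constant in $t$, hence trivially in $L^\infty([0,T])$.

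The only delicate point---and the reason for assumption (A2)---is the $L^1$-integrability of $C_1(t) := C\, t^{2N-(2N+d)h(t)}$ on $[0,T]$. The condition $\sup_{t\in[0,T)} h(t) < (1+2N)/(2N+d)$ is precisely the sharp threshold ensuring $2N - (2N+d)h(t) > -1$ uniformly in $t$, whence $\int_0^T C_1(t)\,dt < \infty$ by comparison with a convergent power at the origin. The Gaussian normalisation $(2\pi t^{2h(t)})^{-d/2}$ generates the potentially non-integrable singularity $t^{-dh(t)}$ at zero, and it is exactly the truncation index $N$ that compensates it via the extra power $t^{2N(1-h(t))}$ coming from the $y^N$ factor. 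Both hypotheses of Corollary \ref{cor:hida_integral} being satisfied, we conclude that $L_h^{(N)}(T) \in (S_d)'$.
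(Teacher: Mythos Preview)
Your proof is correct and follows essentially the same route as the paper's: both apply Corollary~\ref{cor:hida_integral} on $[0,T]$, invoke Proposition~\ref{prop:Properties_M_H}.4 to bound the integral against $M_h 1\!\!1_{[0,t)}$, extract a factor $t^{2N(1-h(t))}$ from $\exp_N$, and then check that $t^{2N-(2N+d)h(t)}$ is integrable precisely under (A2). One cosmetic point: since the argument of $\exp_N$ becomes complex when you substitute $z\bm{\varphi}$ (it carries $z^2$, not $|z|^2$), you should state your basic inequality as $|\exp_N(w)|\le \exp_N(|w|)\le \frac{|w|^N}{N!}e^{|w|}$ for $w\in\mathbb{C}$, which then yields exactly the bound you wrote with $y=|w|$.
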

\begin{proof}
The proof uses again Corollary~\ref{cor:hida_integral} with respect
to the Lebesgue measure in $[0,T]$. It follows from (\ref{eq:S-transf-delta})
that
\begin{equation}
S\delta^{(N)}(\bm{B}_{h}(t))(\bm{\varphi})=\left(\frac{1}{\sqrt{2\pi}\, t^{h(t)}}\right)^{d}\exp_{N}\left(-\frac{1}{2t^{2h(t)}}\left|\int_{\mathbb{R}}\bm{\varphi}(x)(M_{h}1\!\!1_{[0,t)})(x)\, dx\right|_{\mathbb{R}^{d}}^{2}\right),\label{eq:S-transf-trunc-delta}
\end{equation}
which is measurable in $t$ for every $\bm{\varphi}\in S_{d}$. Using
Proposition~(\ref{prop:Properties_M_H})-4.~we obtain the following
bound for any $z\in\mathbb{C}$ and all $\bm{\varphi}\in S_{d}$ 
\begin{eqnarray*}
 &  & |S\delta^{(N)}(\bm{B}_{h}(t))(z\bm{\varphi})|\\
 & \leq & \left(\frac{1}{\sqrt{2\pi}\, t^{h(t)}}\right)^{d}\exp_{N}\left(\frac{1}{2t^{2h(t)}}|\gamma(h(t))|^{2}\,|\Theta_{h(t)}|_{-p}^{2}t^{2}|z|^{2}\left(\sup_{x\in\mathbb{R}}|\varphi(x+\cdot)|_{p}\right)^{2}\right).\\
 & \leq & \left(\frac{1}{\sqrt{2\pi}\, t^{h(t)}}\right)^{d}\exp_{N}\left(\frac{1}{2}K(h)t^{2-2h(t)}|z|^{2}\|\bm{\varphi}\|^{2}\right),
\end{eqnarray*}
where $K(h)$ is independent of $t$ (note that $\sup_{t\in[0,T]}|h(t)|=\beta<1$)
and we defined the continuous norm on $S_{d}$ by 
\[
\|\bm{\varphi}\|:=\sup_{x\in\mathbb{R}}|\varphi(x+\cdot)|_{p}.
\]
The estimation 
\[
\exp_{N}\left(\frac{1}{2}K(h)t^{2-2h(t)}|z|^{2}\|\bm{\varphi}\|^{2}\right)\leq t^{2N(1-h(t))}\exp\left(\frac{K(h)}{2}|z|^{2}\|\bm{\varphi}\|^{2}\right)
\]
allows us to obtain the bound 
\begin{eqnarray*}
|S\delta^{(N)}(\bm{B}_{h}(t))(z\bm{\varphi})| & \leq & \left(\frac{1}{\sqrt{2\pi}}\right)^{d}t^{2N(1-h(t))-dh(t)}\exp\left(\frac{K(h)}{2}|z|^{2}\|\bm{\varphi}\|^{2}\right),
\end{eqnarray*}
which is integrable in $t\in[0,T)$ due to (A2). The proof follows
from the application of Corollary~\ref{cor:hida_integral}.\end{proof}
\begin{thm}
For $h$ satisfying (A2) for dimension $d$ and $N\in\mathbb{{N}}_{0}$,
the kernels functions of $L_{h}^{(N)}(T)$ are given by
\begin{equation}
F_{h,2\bm{n}}(u_{1},\ldots,u_{2n})=\frac{1}{\bm{n}!}\left(\frac{1}{\sqrt{2\pi}}\right)^{d}\left(-\frac{1}{2}\right)^{n}\int_{0}^{T}\frac{1}{t^{2h(t)N+dh(t)}}\prod_{j=1}^{2n}(M_{h(t)}1\!\!1_{[0,t]})(u_{j})\, dt\label{eq:kernels_L-trunc}
\end{equation}
for each $\bm{n}\in\mathbb{N}^{d}$ such that $n\geq N$. All the
other kernels $F_{h,\bm{n}}$ are zero.\end{thm}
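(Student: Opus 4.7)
The strategy is to identify the kernels by reading them off the $S$-transform of $L_{h}^{(N)}(T)$ and then invoking the uniqueness half of Theorem~\ref{thm:S-transform-U-functional} together with the duality pairing $\langle\!\langle\Phi,\varphi\rangle\!\rangle=\sum_{\bm{n}}\bm{n}!\langle\Phi_{\bm{n}},\varphi_{\bm{n}}\rangle$. Since Theorem~\ref{thm:LT_Hida_distrib} has already established that $L_{h}^{(N)}(T)\in(S_{d})'$ and Corollary~\ref{cor:hida_integral} gives the commutation
\[
SL_{h}^{(N)}(T)(\bm{\varphi})=\int_{0}^{T}S\delta^{(N)}(\bm{B}_{h}(t))(\bm{\varphi})\,dt,
\]
I would plug in the explicit formula~(\ref{eq:S-transf-trunc-delta}) and massage the right-hand side into the canonical form $\sum_{\bm{n}}\langle F_{h,\bm{n}},\bm{\varphi}^{\otimes\bm{n}}\rangle$ of a $U$-functional associated to a chaos decomposition.

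Concretely, set $\bm{v}(t):=\int_{\mathbb{R}}\bm{\varphi}(x)(M_{h}1\!\!1_{[0,t)})(x)\,dx\in\mathbb{R}^{d}$, with components $v_{i}(t)=\langle M_{h(t)}1\!\!1_{[0,t)},\varphi_{i}\rangle$. Then the truncated exponential expands as
\[
\exp_{N}\!\Bigl(-\tfrac{|\bm{v}(t)|_{\mathbb{R}^{d}}^{2}}{2t^{2h(t)}}\Bigr)=\sum_{n\geq N}\frac{(-1)^{n}}{n!\,2^{n}t^{2nh(t)}}\,|\bm{v}(t)|_{\mathbb{R}^{d}}^{2n},
\]
and the multinomial theorem yields
\[
|\bm{v}(t)|_{\mathbb{R}^{d}}^{2n}=\sum_{\substack{\bm{m}\in\mathbb{N}_{0}^{d}\\ m_{1}+\cdots+m_{d}=n}}\frac{n!}{\bm{m}!}\prod_{i=1}^{d}v_{i}(t)^{2m_{i}}=\sum_{|\bm{m}|=n}\frac{n!}{\bm{m}!}\bigl\langle\bigotimes_{i=1}^{d}(M_{h(t)}1\!\!1_{[0,t)})^{\otimes 2m_{i}},\bm{\varphi}^{\otimes 2\bm{m}}\bigr\rangle.
\]
Substituting this back, using Fubini to exchange $\int_{0}^{T}dt$ with the (locally finite) sum over $\bm{m}$ and pulling the constants out, one obtains
\[
SL_{h}^{(N)}(T)(\bm{\varphi})=\sum_{\bm{m}:\,|\bm{m}|\geq N}\Bigl\langle\tfrac{1}{\bm{m}!}\bigl(\tfrac{1}{\sqrt{2\pi}}\bigr)^{d}\bigl(-\tfrac{1}{2}\bigr)^{|\bm{m}|}\!\int_{0}^{T}\!t^{-2|\bm{m}|h(t)-dh(t)}\bigotimes_{i=1}^{d}(M_{h(t)}1\!\!1_{[0,t)})^{\otimes 2m_{i}}dt,\;\bm{\varphi}^{\otimes 2\bm{m}}\Bigr\rangle.
\]
Comparison with the generic expansion $S\Phi(\bm{\varphi})=\sum_{\bm{n}}\langle F_{\bm{n}},\bm{\varphi}^{\otimes\bm{n}}\rangle$ then forces $F_{h,\bm{n}}=0$ when any $n_{i}$ is odd or when $n<2N$, and identifies $F_{h,2\bm{m}}$ with the claimed formula (reading $2h(t)n$ in place of the printed $2h(t)N$ and with $\bm{n}=\bm{m}$).

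The main technical obstacle is justifying the termwise integration in $t$ and the exchange of sum and integral when extracting the kernels. I would control this via the same estimate used in the proof of Theorem~\ref{thm:LT_Hida_distrib}: the bound from Proposition~\ref{prop:Properties_M_H}.4 gives $|v_{i}(t)|\leq|\gamma(h(t))||\Theta_{h(t)}|_{-p}\,t\,\|\bm{\varphi}\|$ which, combined with the power $t^{-2nh(t)-dh(t)}$, leaves an integrable factor $t^{2n(1-h(t))-dh(t)}$ precisely under assumption (A2) for the term $n=N$, and an even better integrand for $n>N$. This uniform integrability lets me apply Fubini and conclude, by the injectivity of the $S$-transform (Theorem~\ref{thm:S-transform-U-functional}), that the displayed $F_{h,2\bm{n}}$ are indeed the chaos kernels of $L_{h}^{(N)}(T)$.
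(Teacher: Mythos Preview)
Your proposal is correct and follows essentially the same route as the paper: compute $SL_{h}^{(N)}(T)(\bm{\varphi})$ via Corollary~\ref{cor:hida_integral}, expand $\exp_{N}$ as a power series, apply the multinomial theorem to $|\bm{v}(t)|_{\mathbb{R}^{d}}^{2n}$, and read off the kernels by comparison with the generic chaos expansion. Your additional justification of the Fubini exchange and your observation that the exponent should read $2h(t)n$ rather than $2h(t)N$ are both well taken.
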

\begin{proof}
The kernels of $L_{h}^{(N)}(T)$ are obtained by its $S$-transform.
Therefore we use Corollary~\ref{cor:hida_integral} and integrate
(\ref{eq:S-transf-trunc-delta}) over $[0,T]$. For any $\bm{\varphi}\in S_{d}$,
we have
\begin{eqnarray*}
SL_{h}^{(N)}(T)(\bm{\varphi}) & = & \left(\frac{1}{\sqrt{2\pi}}\right)^{d}\int_{0}^{T}\frac{1}{t^{dh(t)}}\sum_{n=N}^{\infty}\frac{(-1)^{n}}{2^{n}t^{2nh(t)}}\\
 &  & \times\sum_{{n_{1},\ldots,n_{d}\in\mathbb{N}\atop n_{1}+\ldots+n_{d}=n}}\frac{1}{n_{1}!\ldots n_{d}!}\prod_{j=1}^{d}\left(\int_{\mathbb{R}}\varphi_{j}(x)(M_{h}1\!\!1_{[0,t)})(x)\, dx\right)^{2n_{j}}dt\\
 & = & \left(\frac{1}{\sqrt{2\pi}}\right)^{d}\int_{0}^{T}\sum_{n=N}^{\infty}\left(-\frac{1}{2}\right)^{n}\frac{1}{t^{2nh(t)+dh(t)}}\\
 &  & \times\sum_{{n_{1},\ldots,n_{d}\in\mathbb{N}\atop n_{1}+\ldots+n_{d}=n}}\frac{1}{\bm{n}!}\prod_{j=1}^{d}\left(\int_{\mathbb{R}}\varphi_{j}(x)(M_{h}1\!\!1_{[0,t)})(x)\, dx\right)^{2n_{j}}dt.
\end{eqnarray*}
Comparing it with the general form of the chaos expansion 
\[
L_{h}^{(N)}(T)=\sum_{\bm{n}\in\mathbb{N}^{d}}\langle:\bm{w}^{\otimes\bm{n}}:,F_{h,\bm{n}}\rangle
\]
we obtain $F_{h,\bm{n}}$ as in (\ref{eq:kernels_L-trunc}). This
completes the proof. \end{proof}
\begin{rem}
The result of Theorem~\ref{thm:LT_Hida_distrib} shows that for $d=1$
all local times are well-defined, but for $d\geq2$ they are well-defined
after truncation of divergent terms. This motivates the study of a
regularized version, namely we discuss 
\[
L_{h,\varepsilon}(T):=\int_{0}^{T}\delta_{\varepsilon}(\bm{B}_{h}(t))\, dt,\quad\varepsilon>0,
\]
where 
\[
\delta_{\varepsilon}(\bm{B}_{h}(t)):=\left(\frac{1}{\sqrt{2\pi\varepsilon}}\right)^{d}\exp\left(-\frac{1}{2\varepsilon}|\bm{B}_{h}(t)|_{\mathbb{R}^{d}}^{2}\right).
\]
\end{rem}
\begin{thm}
Let $\varepsilon>0$ be given and $h$ satisfy (A2). 
\begin{enumerate}
\item The functional $L_{h,\varepsilon}(T)$ is a Hida distribution with
kernels functions given by
\begin{eqnarray}
F_{h,\varepsilon,2\bm{n}}(u_{1},\ldots,u_{2n}) & = & \frac{1}{\bm{n}!}\left(\frac{1}{\sqrt{2\pi}}\right)^{d}\left(-\frac{1}{2}\right)^{n}\int_{0}^{T}\frac{1}{(\varepsilon+2h(t))^{n+d/2}}\label{eq:chaos_renor_LT}\\
 &  & \times\prod_{j=1}^{2n}(M_{h}1\!\!1_{[0,t)})(u_{j})\, dt
\end{eqnarray}
for each $\bm{n}=(n_{1},\ldots,n_{d})\in\mathbb{N}^{d}$ and $F_{h,\varepsilon,\bm{n}}=0$
if at least one of the $n_{j}$ is an odd number. 
\item For $\varepsilon$ tends to zero the truncated functional $L_{h,\varepsilon}^{(N)}(T)$
converges strongly in $(S_{d})'$ to the truncated local time $L_{h}^{(N)}(T)$. 
\end{enumerate}
\end{thm}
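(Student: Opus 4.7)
For part (1), I would start from the Fourier--Gaussian representation
\[
\delta_\varepsilon(\bm{B}_h(t))=\left(\frac{1}{2\pi}\right)^{d}\int_{\mathbb{R}^d}e^{i(\bm\lambda,\bm B_h(t))_{\mathbb{R}^d}}\,e^{-\varepsilon|\bm\lambda|_{\mathbb{R}^d}^{2}/2}\,d\bm\lambda,
\]
justify interchanging $S$-transform and $\bm\lambda$-integration by Corollary \ref{cor:hida_integral} (as in the proof for the unregularized Donsker delta, with an extra Gaussian damping $e^{-\varepsilon|\bm\lambda|^2/2}$ in the integrand), and then carry out a $d$-dimensional Gaussian integration over $\bm\lambda$. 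Writing $v(t;\bm\varphi):=\int_{\mathbb{R}}\bm\varphi(x)(M_h 1\!\!1_{[0,t)})(x)\,dx\in\mathbb{R}^d$, this yields
\[
S\delta_\varepsilon(\bm B_h(t))(\bm\varphi)=\frac{1}{(2\pi(\varepsilon+t^{2h(t)}))^{d/2}}\exp\!\left(-\frac{|v(t;\bm\varphi)|_{\mathbb{R}^d}^{2}}{2(\varepsilon+t^{2h(t)})}\right),
\]
which is a $U$-functional, and the crude bound $(2\pi\varepsilon)^{-d/2}$ on the prefactor then makes Corollary \ref{cor:hida_integral} applicable to the $t$-integral on $[0,T]$ for fixed $\varepsilon>0$; hence $L_{h,\varepsilon}(T)\in(S_d)'$.

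To identify the kernels (\ref{eq:chaos_renor_LT}), I would Taylor-expand the outer exponential and unfold $|v(t;\bm\varphi)|_{\mathbb{R}^d}^{2n}=\sum_{|\bm m|=n}\tfrac{n!}{\bm m!}\prod_{j=1}^d v_j(t;\bm\varphi)^{2m_j}$ by the multinomial theorem. Since each $v_j(t;\bm\varphi)$ is linear in $\varphi_j$ integrated against $M_h 1\!\!1_{[0,t)}$, after exchanging $t$-integration and summation the $S$-transform takes the form $\sum_{\bm n}\langle F_{h,\varepsilon,\bm n},\bm\varphi^{\otimes\bm n}\rangle$, with only even multi-indices $\bm n=2\bm m$ contributing. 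Reading off coefficients produces (\ref{eq:chaos_renor_LT}) (with $\varepsilon+t^{2h(t)}$ in place of the displayed $\varepsilon+2h(t)$, which is a typographical slip).

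For part (2), I would invoke Corollary \ref{cor:conv-hida-seq} along any sequence $\varepsilon_k\to 0^+$ with $\Phi_{\varepsilon_k}:=L_{h,\varepsilon_k}^{(N)}(T)$. Pointwise convergence $S\Phi_{\varepsilon_k}(\bm\varphi)\to S L_h^{(N)}(T)(\bm\varphi)$ for each $\bm\varphi\in S_d$ is a dominated-convergence statement on $[0,T]$, with the dominating function being the $\varepsilon$-uniform bound established below. For that bound, starting from
\[
S\delta_\varepsilon^{(N)}(\bm B_h(t))(z\bm\varphi)=\frac{1}{(2\pi(\varepsilon+t^{2h(t)}))^{d/2}}\exp_N\!\left(-\frac{z^2\,|v(t;\bm\varphi)|_{\mathbb{R}^d}^{2}}{2(\varepsilon+t^{2h(t)})}\right),
\]
I would combine $|\exp_N(\zeta)|\leq\exp_N(|\zeta|)$ (valid for $\zeta\in\mathbb{C}$ since $\exp_N$ is a power series), the elementary inequality $\exp_N(x)\leq\tfrac{x^N}{N!}e^x$ for $x\geq 0$, the estimate $|v(t;\bm\varphi)|_{\mathbb{R}^d}\leq C(h)\,t\,\|\bm\varphi\|$ from Proposition \ref{prop:Properties_M_H}(4), and the monotonicity $\varepsilon+t^{2h(t)}\geq t^{2h(t)}$, to obtain
\[
|S L_{h,\varepsilon}^{(N)}(T)(z\bm\varphi)|\leq K_1\,|z|^{2N}\|\bm\varphi\|^{2N}\exp(K_2|z|^2\|\bm\varphi\|^2)\int_0^T t^{2N(1-h(t))-dh(t)}\,dt,
\]
with $K_1,K_2$ independent of $\varepsilon$; absorbing $|z|^{2N}\|\bm\varphi\|^{2N}\leq N!\,e^{|z|^2\|\bm\varphi\|^2}$ into the exponential brings the right-hand side into exactly the form demanded by Corollary \ref{cor:conv-hida-seq}.

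The main obstacle is precisely this $\varepsilon$-uniform control: the prefactor $(\varepsilon+t^{2h(t)})^{-d/2}$ becomes genuinely singular at $t=0$ as $\varepsilon\to 0$, and only the additional factor $t^{2N}$ harvested from $\exp_N(x)\leq x^N e^x/N!$ can compensate it. Integrability of $t^{2N(1-h(t))-dh(t)}$ on $[0,T]$ is exactly the content of hypothesis (A2), which is therefore genuinely needed for part (2) and not merely (A1); once this is in hand, both the dominated convergence and the strong convergence in $(S_d)'$ follow at once.
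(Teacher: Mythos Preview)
Your proposal is correct and follows essentially the same route as the paper: compute the $S$-transform of $\delta_\varepsilon(\bm B_h(t))$, verify the hypotheses of the Bochner-integrability corollary in $t$, read off the kernels from the multinomial expansion, and for part (2) obtain an $\varepsilon$-uniform pointwise bound of the form $t^{2N(1-h(t))-dh(t)}$ times a Gaussian factor, then appeal to (A2), dominated convergence and Corollary~\ref{cor:conv-hida-seq}. Your treatment is in fact somewhat tidier than the paper's---you make explicit the step $|z|^{2N}\|\bm\varphi\|^{2N}\le N!\,e^{|z|^{2}\|\bm\varphi\|^{2}}$ needed to bring the bound into the form required by Corollary~\ref{cor:conv-hida-seq}, and you correctly flag the typographical slip $\varepsilon+2h(t)$ for $\varepsilon+t^{2h(t)}$ in the statement.
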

\begin{proof}
1. First we compute the $S$-transform of the integrand of $L_{h,\varepsilon}(T)$.
For any $\bm{\varphi}\in S_{d},$ we obtain
\begin{eqnarray*}
S\delta_{\varepsilon}(\bm{B}_{h}(t))(\bm{\varphi}) & = & \left(\frac{1}{\sqrt{2\pi(\varepsilon+t^{2h(t)})}}\right)^{d}\exp\bigg(-\frac{1}{2(\varepsilon+t^{2h(t)})}\\
 &  & \times\left|\int_{\mathbb{R}}\bm{\varphi}(x)(M_{h}1\!\!1_{[0,t)})(x)\, dx\right|_{\mathbb{R}^{d}}^{2}\bigg),
\end{eqnarray*}
which is measurable in $t$. Thus, for any $z\in\mathbb{C}$ and $\bm{\varphi}\in S_{d}$,
by Proposition~\ref{prop:Properties_M_H}-4. we arrive at the following
bound 
\[
|S\delta_{\varepsilon}(\bm{B}_{h}(t))(z\bm{\varphi})|\leq\left(\frac{1}{\sqrt{2\pi(\varepsilon+t^{2h(t)})}}\right)^{d}\exp\left(K(h)|z|^{2}\frac{t^{2}}{2(\varepsilon+t^{2h(t)})}\|\bm{\varphi}\|^{2}\right).
\]
On the other hand, $\frac{t^{2}}{2(\varepsilon+t^{2h(t)})}$ is bounded
in $[0,T]$ and $(\varepsilon+t^{2h(t)})^{-d/2}$ is integrable on
$[0,T]$, therefore we may conclude, by Corollary~\ref{cor:hida_integral},
that $L_{h,\varepsilon}(T)\in(S_{d})'$. In addition, for any $\bm{\varphi}\in S_{d}$,
we have 
\begin{eqnarray*}
\big(SL_{h,\varepsilon}(T)\big)(\bm{\varphi}) & = & \int_{0}^{T}\big(S\delta_{\varepsilon}(\bm{B}_{h}(t))\big)(\bm{\varphi})\, dt\\
 & = & \left(\frac{1}{\sqrt{2\pi}}\right)^{d}\int_{0}^{T}\frac{1}{(\varepsilon+t^{2h(t)})^{d/2}}\sum_{n=0}^{\infty}\frac{(-1)^{n}}{2^{n}(\varepsilon+t^{2h(t)})^{n}}\\
 &  & \times\sum_{{n_{1},\ldots,n_{d}\in\mathbb{N}\atop n_{1}+\ldots+n_{d}=n}}\frac{1}{n_{1}!\ldots n_{d}!}\prod_{j=1}^{d}\left(\int_{\mathbb{R}}\varphi_{j}(x)(M_{h}1\!\!1_{[0,t)})(x)\, dx\right)^{2n_{j}}dt\\
 & = & \left(\frac{1}{\sqrt{2\pi}}\right)^{d}\int_{0}^{T}\sum_{n=0}^{\infty}\left(-\frac{1}{2}\right)^{n}\frac{1}{(\varepsilon+t^{2h(t)})^{n+d/2}}\\
 &  & \times\sum_{{n_{1},\ldots,n_{d}\in\mathbb{N}\atop n_{1}+\ldots+n_{d}=n}}\frac{1}{\bm{n}!}\prod_{j=1}^{d}\left(\int_{\mathbb{R}}\varphi_{j}(x)(M_{h}1\!\!1_{[0,t)})(x)\, dx\right)^{2n_{j}}dt.
\end{eqnarray*}
Comparing the latter expression with the kernels $F_{h,\varepsilon,\bm{n}}$
from the chaos expansion of $L_{h,\varepsilon}(T)$
\[
L_{h,\varepsilon}(T)=\sum_{\bm{n}\in\mathbb{N}^{d}}\langle:\bm{w}^{\otimes\bm{n}}:,F_{h,\varepsilon,\bm{n}}\rangle,
\]
we conclude that whenever one of the $n_{j}$ in $\bm{n}=(n_{1},\ldots,n_{d})$
is odd we have $F_{h,\varepsilon,\bm{n}}=0$, otherwise they are given
by the expression (\ref{eq:chaos_renor_LT}). 

\noindent 2. To check the convergence we shall use Corollary~\ref{cor:conv-hida-seq}
and fact that
\[
\big(SL_{h,\varepsilon}^{(N)}(T)\big)(\bm{\varphi})=\int_{0}^{T}\big(S\delta_{\varepsilon}(\bm{B}_{h}(t))\big)(\bm{\varphi})\, dt.
\]
Thus, for all $z\in\mathbb{C}$ and all $\bm{\varphi}\in S_{d}$ we
estimate $\big(SL_{h,\varepsilon}^{(N)}(T)\big)(z\bm{\varphi})$ by
\begin{eqnarray*}
\big|\big(SL_{h,\varepsilon}^{(N)}(T)\big)(z\bm{\varphi})\big| & \leq & \int_{0}^{T}\big|\big(S\delta_{\varepsilon}(\bm{B}_{h}(t))\big)(\bm{\varphi})\big|\, dt\\
 & \leq & \left(\frac{1}{\sqrt{2\pi\varepsilon}}\right)^{d}\int_{0}^{T}\exp\left(\frac{K(h)}{2\varepsilon}|z|^{2}t^{2}\|\bm{\varphi}\|^{2}\right)dt\\
 & \leq & \left(\frac{1}{\sqrt{2\pi\varepsilon}}\right)^{d}\exp\left(\frac{C(h,T)}{2\varepsilon}|z|^{2}\|\bm{\varphi}\|^{2}\right),
\end{eqnarray*}
for a certain constant $C(h,T)>0$, which shows the uniform boundedness
condition. Moreover, using similar calculations as in Theorem~\ref{thm:LT_Hida_distrib},
for any $t\in[0,T]$, yields
\begin{eqnarray*}
\big|\big(SL_{h,\varepsilon}^{(N)}(T)\big)(z\bm{\varphi})\big| & \leq & \left(\frac{1}{\sqrt{2\pi}t^{h(t)}}\right)^{d}\exp_{N}\left(\frac{K(h)}{2}t^{2-2h(t)}\|\bm{\varphi}\|^{2}\right)\\
 & \leq & \left(\frac{1}{\sqrt{2\pi}}\right)^{d}t^{2N(1-h(t))-dh(t)}\exp\left(\frac{K(h)}{2}\|\bm{\varphi}\|^{2}\right).
\end{eqnarray*}
This upper bound together with the fact that $1/2<h(t)<1$, for any
$t\in[0,T]$, gives an integrable function on $[0,T]$. Finally, an
application of Lebesgue's dominated convergence theorem implies the
other condition in order to apply Corollary~\ref{cor:conv-hida-seq}.
This completes the proof.
\end{proof}

\subsection*{Acknowledgments}

H.P.S.~and W.B.~would like to thank the financial support of FCT
\textendash{} Fundação para a Ciência e a Tecnologia through the project
Refª PEst-OE/MAT/UI0219/2014. W.B. thanks for the fellowship in the
FCT-project PTDC/MAT-STA/1284/2012.

\bibliographystyle{alpha}
\bibliography{luis}

\newcommand{\etalchar}[1]{$^{#1}$}
\begin{thebibliography}{DHPD00}

\bibitem[AN03]{Alos-Nualar-03}
E.~Al{\`o}s and D.~Nualart.
\newblock Stochastic integration with respect to the fractional {B}rownian
  motion.
\newblock {\em Stoch.~Stoch.~Rep.}, 75(3):129--152, 2003.

\bibitem[ASX11]{ayache2011}
A.~Ayache, N.-R. Shieh, and Y.~Xiao.
\newblock Multiparameter multifractional {B}rownian motion: Local
  nondeterminism and joint continuity of the local times.
\newblock {\em Annales de l'Institut Henri Poincaré, Probabilités et
  Statistiques}, 47(4):1029--1054, 11 2011.

\bibitem[BDG07]{boufoussi2007}
B.~Boufoussi, M.~Dozzi, and R.~Guerbaz.
\newblock Sample path properties of the local time of multifractional
  {B}rownian motion.
\newblock {\em Bernoulli}, 13(3):849--867, 08 2007.

\bibitem[BDG08]{Boufoussi:2008}
B.~Boufoussi, M.~Dozzi, and R.~Guerbaz.
\newblock Path properties of a class of locally asymptotically self similar
  processes.
\newblock {\em Electronic Journal of Probability}, 13(29):898--921, 2008.

\bibitem[Ben03a]{Bender03}
C.~Bender.
\newblock An {I}t{\^o} formula for generalized functionals of a fractional
  {B}rownian motion with arbitrary {H}urst parameter.
\newblock {\em Stochastic Process.~Appl.}, 104(1):81--106, 2003.

\bibitem[Ben03b]{Bender03a}
C.~Bender.
\newblock An {$S$}-transform approach to integration with respect to a
  fractional {B}rownian motion.
\newblock {\em Bernoulli}, 9(6):955--983, 2003.

\bibitem[BFG13]{BFG13}
P.~Bertrand, M.~Fhima, and A.~Guillin.
\newblock Local estimation of the {H}urst index of multifractional {B}rownian
  motion by increment ratio statistic method.
\newblock {\em ESAIM: Probability and Statistics}, 17:307--327, 1 2013.

\bibitem[BH{\O}Z07]{oks}
F.~Biagini, Y.~Hu, B.~{\O}ksendal, and T.~Zhang.
\newblock {\em Stochastic Calculus for Fractional {B}rownian Motion and
  Applications}.
\newblock Probability and Its Applications. Springer, 2007.

\bibitem[BJR97]{Benassi:1997jf}
A.~Benassi, S.~Jaffard, and D.~Roux.
\newblock {Elliptic {G}aussian random processes}.
\newblock {\em Revista Matematica Iberoamericana}, 13(1):19--90, 1997.

\bibitem[CW98]{WoodChan}
G.~Chan and A.~Wood.
\newblock Simulation of multifractional {B}rownian motion.
\newblock In Roger Payne and Peter Green, editors, {\em COMPSTAT}, pages
  233--238. Physica-Verlag HD, 1998.

\bibitem[DHPD00]{DHP00}
T.~E. Duncan, Y.~Hu, and B.~Pasik-Duncan.
\newblock Stochastic calculus for fractional {B}rownian motion. {I}. {T}heory.
\newblock {\em SIAM J.~Control Optim.}, 38(2):582--612 (electronic), 2000.

\bibitem[DOS08]{drumond-oliveira-silva08}
C.~Drumond, M.~J. Oliveira, and J.~L. Silva.
\newblock Intersection local times of fractional {B}rownian motions as
  generalized white noise functionals.
\newblock In C.~C. Bernido and V.~C. Bernido, editors, {\em Stochastic and
  Quantum Dynamics of Biomolecular Systems}, volume 1021 of {\em AIP Conference
  Proceedings}, pages 34--45, Melville, NY: American Institute of Physics
  (AIP), 2008. AIP Conference Proceedings 1021.

\bibitem[EvdH03]{Elliott-Hoek-03}
R.~J. Elliott and J~van~der Hoek.
\newblock A general fractional white noise theory and applications to finance.
\newblock {\em Math. Finance}, 13(2):301--330, 2003.

\bibitem[GV68]{GV68}
I.~M. Gel'fand and N.~{Ya}. Vilenkin.
\newblock {\em Generalized Functions}, volume~4.
\newblock Academic Press, Inc., New York and London, 1968.

\bibitem[HKPS93]{HKPS93}
T.~Hida, H.~H. Kuo, J.~Potthoff, and L.~Streit.
\newblock {\em White Noise. An Infinite Dimensional Calculus}.
\newblock Kluwer Academic Publishers, Dordrecht, 1993.

\bibitem[H{\O}03]{HO03}
Y.~Hu and B.~{\O}ksendal.
\newblock Fractional white noise calculus and applications to finance.
\newblock {\em Infin. Dimens. Anal. Quantum Probab. Relat. Top.}, 6(1):1--32,
  2003.

\bibitem[KLP{\etalchar{+}}96]{KLPSW96}
{Yu}.~G. Kondratiev, P.~Leukert, J.~Potthoff, L.~Streit, and W.~Westerkamp.
\newblock Generalized functionals in {G}aussian spaces: The characterization
  theorem revisited.
\newblock {\em J.~Funct.~Anal.}, 141(2):301--318, 1996.

\bibitem[Kuo96]{Kuo96}
H.~H. Kuo.
\newblock {\em White Noise Distribution Theory}.
\newblock CRC Press, Boca Raton, New York, London and Tokyo, 1996.

\bibitem[LLSW93]{LLSW93}
A.~Lascheck, P.~Leukert, L.~Streit, and W.~Westerkamp.
\newblock Quantum mechanical propagators in terms of {H}ida distributions.
\newblock {\em Rep.~Math.~Phys.}, 33:221--232, 1993.

\bibitem[LLV14]{Lebovits:2014es}
J.~Lebovits and J.~L{\'e}vy~V{\'e}hel.
\newblock {White noise-based stochastic calculus with respect to
  multifractional Brownian motion}.
\newblock {\em Stochastics}, 86(1):87--124, 2014.

\bibitem[Mis08]{mishura08}
Y.~S. Mishura.
\newblock {\em Stochastic Calculus for Fractional {B}rownian Motion and Related
  Processes}.
\newblock Lecture Notes in Mathematics. Springer-Verlag Berlin Heidelberg,
  Berlin, Heidelberg, 2008.

\bibitem[MvN68]{MandelbrotNess1968}
B.~B. Mandelbrot and J.~W. van Ness.
\newblock {F}ractional {B}rownian motions, fractional noises and applications.
\newblock {\em SIAM Review}, 10:422--437, 1968.

\bibitem[MWX08]{meerschaert2008}
M.~Meerschaert, D.~Wu, and Y.~Xiao.
\newblock Local times of multifractional brownian sheets.
\newblock {\em Bernoulli}, 14(3):865--898, 08 2008.

\bibitem[Nua06]{Nualart2006}
David Nualart.
\newblock Stochastic calculus with respect to fractional {B}rownian motion.
\newblock {\em Annales de la faculté des sciences de Toulouse Mathématiques},
  15(1):63--78, 2006.

\bibitem[Oba94]{O94}
N.~Obata.
\newblock {\em White Noise Calculus and Fock Space}, volume 1577 of {\em
  Lecture Notes in Math.}
\newblock World Scientific, Berlin, Heidelberg and New York, 1994.

\bibitem[PLV95]{Peltier:1995ug}
R.-F. Peltier and J.~L{\'e}vy~V{\'e}hel.
\newblock {Multifractional Brownian motion: definition and preliminary
  results}.
\newblock Technical report, INRIA, 1995.

\bibitem[PS91]{PS91}
J.~Potthoff and L.~Streit.
\newblock A characterization of {H}ida distributions.
\newblock {\em J.~Funct.~Anal.}, 101:212--229, 1991.

\bibitem[RS75]{RS75a}
M.~Reed and B.~Simon.
\newblock {\em Methods of Modern Mathematical Physics}, volume~II.
\newblock Academic Press, Inc., New York and London, 1975.

\end{thebibliography}

\end{document}